\documentclass[10pt]{article}

\usepackage[english]{babel}
\usepackage{amsmath}
\usepackage{amssymb}
\usepackage{amsthm}
\usepackage[T1]{fontenc}
\usepackage[utf8]{inputenc} 
\usepackage{verbatim}
\usepackage{enumerate}
\usepackage[final]{pdfpages}
\usepackage{epigraph}
\usepackage{geometry} 
\usepackage{mathtools}

\setlength\epigraphwidth{13cm}
\setlength\epigraphrule{0 pt}

\def\Y#1#2{\mathcal{Y}_{#1}^{#2}}
\newcommand{\eps}{\varepsilon}

\newtheorem{thm}{Theorem}
\newtheorem{lemma}[thm]{Lemma}
\newtheorem{claim}[thm]{Claim}
\newtheorem{alg}[thm]{Algorithm}

\newtheorem{prop}[thm]{Proposition}

\newtheorem{fact}[thm]{Fact}
\theoremstyle{remark}
\newtheorem*{remark}{Remark}
\newtheorem*{defn}{Definition}
\newtheorem{conj}[thm]{Conjecture}

\renewcommand\footnotemark{}

\hyphenation{sa-tis-fy se-ries pro-ba-bi-li-ty}

\hoffset -1.0cm
\voffset -0.5cm
\textwidth 16.5cm
\textheight 22.2cm

\title{Multi-coloured jigsaw percolation on random graphs}

\author
{Oliver Cooley, Abraham Guti\'errez$^{*}$
\footnote{Institute of Discrete Mathematics, Graz University of Technology, Steyrergasse 30, 8010 Graz, Austria.}
\footnote{$\lbrace$cooley, a.gutierrez$\rbrace$@math.tugraz.at}
\footnote{$^*$Supported by Austrian Science Fund project FWF P29355-N35.}
\\
\today
}

\date{}

\begin{document}

\maketitle

\begin{abstract}
The jigsaw percolation process, introduced by Brummitt, Chatterjee, Dey and Sivakoff, was inspired by
a group of people collectively solving a puzzle. It can also be seen as a measure of whether two graphs
on a common vertex set are ``jointly connected''. In this paper we consider the natural generalisation
of this process to an arbitrary number of graphs on the same vertex set. We prove that if these graphs
are random, then the jigsaw percolation process exhibits a phase transition in terms of the product of
the edge probabilities. This generalises a result of Bollob\'as, Riordan, Slivken and Smith.

\medskip

\noindent \emph{Mathematics Subject Classification:} 05C80
\end{abstract}

\section{Introduction}

\subsection{Jigsaw Percolation.}

In recent years there has been significant research inspired by the observation that certain advances
are only possible as a result of the collaboration of a group of people, rather than the work of one
individual e.g.~\cite{Ball14, Newman01b, Newman01a, Tebbe11}.

To model this mathematically, Brummitt, Chatterjee, Dey and Sivakoff~\cite{jigsawBCDS} introduced
the jigsaw percolation process. The premise is that a group of people each have one piece of a puzzle
which must be combined in a certain way to solve the puzzle. The individuals (and their associated
puzzle pieces) are represented by a set of vertices, and there are two graphs on these vertices:
a \emph{people graph}, with an edge if the two people know each other; and a \emph{puzzle graph}
with an edge if the two puzzle pieces are compatible. In the jigsaw percolation process, we begin with
each vertex forming its own \emph{cluster} and we merge
two clusters if there is an edge between them in both the people and the puzzle graph -- this represents these 
two people sharing all their information. The new merged cluster inherits all the incident edges 
of the original clusters. (The process will be described more formally
later.) This process is iterated until either there is only
one cluster remaining, in which case we say that the process \emph{percolates} indicating that the
puzzle has been solved, or no more clusters can be merged, in which case we say that the process
\emph{does not percolate}. More generally, if the two graphs are $G_1$ and $G_2$, we say that
the \emph{double-graph} $(G_1,G_2)$ percolates or does not percolate respectively.

This process was introduced by Brummitt, Chatterjee, Dey and Sivakoff in~\cite{jigsawBCDS}
 and was also considered by Gravner and Sivakoff in~\cite{GravnerSivakoff17}.

Bollob\'as, Riordan, Slivken and Smith~\cite{jigsawBRSS} considered the
case when the people graph and the puzzle graph are independent binomial random graphs, and proved
that the property of the two graphs percolating undergoes a phase transition in terms of the product
of the two associated edge probabilities.
More precisely, their result can be stated as follows. Let $G(n,p)$ denote the Erd\H{o}s-R\'enyi
binomial random graph on vertex set $[n]:=\{1,2,\ldots,n\}$ in which each pair of vertices
forms an edge with probability $p$ independently of each other.
We say that a property or event holds
\emph{with high probability} (abbreviated to \emph{whp}), if it holds with probability tending
to $1$ as $n$ tends to infinity.

\begin{thm}[\cite{jigsawBRSS}]\label{thm:JigsawBRSS}
	There exists a constant $c$ such that the following holds:
	let  $G_{1} = G(n, p_{1}), G_{2} = G(n, p_2)$ be independent binomial
	random graphs on the same vertex set,
	where $0\leq p_{1} = p_{1}(n), p_{2}= p_{2}(n)\leq 1$. Then
	\begin{itemize}
		\item[$(i)$] if $p_{1}p_{2}\leq \frac{1}{cn\ln n}$ or $\min \{ p_{1},p_{2} \} \leq \frac{\ln n}{cn}$ then whp $(G_{1},G_{2})$ does not percolate;
		\item[$(ii)$]if $p_{1}p_{2}\geq \frac{c}{n \ln n}$ and $\min \{ p_{1}, p_{2} \} \geq \frac{c \ln n}{n},$ then whp $(G_{1},G_{2})$ percolates. 
	\end{itemize}
\end{thm}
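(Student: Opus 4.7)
For part (i), the easier subcase is $\min\{p_1,p_2\}\le\ln n/(cn)$: for $c$ sufficiently large, the classical connectivity threshold for $G(n,p)$ guarantees that the corresponding graph whp has an isolated vertex, which is trapped in its own singleton cluster and prevents percolation. For the subcase $p_1p_2\le 1/(cn\ln n)$ I would use a first moment argument on ``internally percolated'' sets: say that $S\subseteq[n]$ of size $k$ is \emph{internally percolated} if $(G_1[S],G_2[S])$ percolates. Two facts drive the calculation. First, a single merge at most doubles the size of the maximum cluster, so if $(G_1,G_2)$ percolates then for every $k$ there is an internally percolated set of some size $k'\in[k,2k-1]$. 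Second, an internally percolated $S$ has both $G_1[S]$ and $G_2[S]$ connected (the merge-edges in each colour form a spanning connected subgraph), so by Cayley's formula $\Pr(S\text{ is internally percolated})\le(k^{k-2}p_1^{k-1})(k^{k-2}p_2^{k-1})=k^{2k-4}(p_1p_2)^{k-1}$. Multiplying by $\binom{n}{k}\le(en/k)^k$ and choosing $k$ of order $\ln n$ (specifically $k=\lfloor\ln n/(2e)\rfloor$), a short calculation shows that for $c$ large enough the expected number of internally percolated $k'$-sets with $k'\in[k,2k-1]$ is $o(1)$, and so whp no such set exists.

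For part (ii), the two hypotheses play complementary roles: the bound $\min\{p_1,p_2\}\ge c\ln n/n$ ensures that once a cluster is large, every external vertex has many neighbours in it in each colour; the product bound $p_1p_2\ge c/(n\ln n)$ ensures that even small clusters can be merged with non-negligible probability. A natural strategy is a two-phase ``sprinkling'' argument. Split each graph as a union of two independent sub-random graphs $G_i=G_i'\cup G_i''$, each satisfying a similar hypothesis up to absolute constants. In the first phase use $(G_1',G_2')$ to run jigsaw and show that a giant internally percolated cluster of linear size is produced whp; here one exploits the fact that the probability two clusters of sizes $s_1,s_2$ can be merged is of order $(1-(1-p_1)^{s_1s_2})(1-(1-p_2)^{s_1s_2})$, which is much larger than $p_1p_2$ once $s_1s_2$ is moderately large, so cluster sizes grow roughly geometrically. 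In the second phase, reveal $(G_1'',G_2'')$ and show by Chernoff and union bounds that every vertex outside the giant cluster has, whp, a neighbour inside it in both $G_1''$ and $G_2''$ simultaneously, so it is absorbed.

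The main obstacle is the first phase of part (ii): constructing the giant cluster requires a careful multi-scale analysis that tracks how the cluster-size distribution evolves under repeated merging, and one must choose the sprinkling parameters so that the seed exists whp while the absorption phase succeeds simultaneously for all remaining vertices. The first-moment bound in part (i) and the absorption step in part (ii) are comparatively routine once the doubling observation, the spanning-tree bound, and the split into phases are in place.
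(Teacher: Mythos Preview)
Your plan for part~(i) is essentially the approach taken in~\cite{jigsawBRSS} (and alluded to in this paper as ``a simple first moment argument''): the connectivity subcase is immediate, and the product subcase goes via the doubling observation plus a union bound over candidate internally percolated sets, using that both colours must contain a spanning tree. That is fine.

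For part~(ii), your high-level decomposition into sprinkling phases matches the paper (which uses three rounds rather than two, but that is cosmetic), and your absorption phase is the same as the paper's Part~III. The gap is in your first phase. You write that ``cluster sizes grow roughly geometrically'' once $s_1s_2$ is moderately large, but this is exactly what fails below the bottleneck at size $\Theta(\ln n)$: for a cluster of size $t=o(\ln n)$, the probability of merging with a new vertex in one step is only of order $np_1p_2t \asymp t/\ln n<1$, so a single attempt to grow a seed past $\ln n$ succeeds only with probability $n^{-\delta}$ for some $\delta>0$. The crucial idea, which your proposal does not contain, is to run \emph{many} independent attempts: the paper's 1-by-1 algorithm repeatedly picks a fresh seed vertex, tries to grow it one vertex at a time, and discards the whole trial set on failure so that the next round is independent. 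Each round succeeds with probability $n^{-\delta}$, but there are $n^{1-o(1)}$ rounds available, so whp at least one reaches size $(\ln n)^{3/2}$. Only \emph{after} this bottleneck is cleared does the geometric doubling you describe take over (this is the paper's Part~II). Your ``multi-scale analysis that tracks how the cluster-size distribution evolves'' is not a substitute for this: tracking the global distribution does not by itself produce a single large cluster, and any argument must somehow cope with the fact that the early growth steps individually fail with constant probability.
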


Note that this theorem is not quite stated as it appeared in~\cite{jigsawBRSS},
but it is easy to derive this form from the original.
We also observe that connectedness of each of the two graphs is a necessary (but not sufficient)
condition for percolation of the double-graph. The conditions on $\min \{ p_{1}, p_{2} \}$ determine
whether this necessary condition is satisfied whp, since the threshold for connectivity is at 
$p = \frac{\ln n}{n}$ as famously proved by Erd\H{o}s and R\'enyi in~\cite{connected}.

Theorem~\ref{thm:JigsawBRSS} was extended to hypergraphs by Bollob\'as, Cooley, Kang and Koch~\cite{hyperpercolation},
with a whole family of generalisations of the percolation process to $k$-uniform hypergraphs in which the clusters
consist of $j$-sets of vertices for $1\le j \le k-1$.

In this paper, we extend in a different direction, namely to an arbitrary number of graphs on the same vertex set.

\begin{defn}\label{def:foldgraph}
	An \textit{$r$-fold graph} is an $(r+1)$-tuple $\mathbf{G}$ $:=(V,E_{1}, ... , E_{r})$,
	where $V:= [n]$ is the set of vertices and $E_{i}\subseteq {\binom{V}{2}}$ for each $i \in [r].$
	We will call $1,2...,r$ the \textit{colours} of $\mathbf{G}$ and the graph $G_{i}=(V,E_{i})$ will
	be said to be of colour $i$ for every $i \in [r].$ 
\end{defn}

The multi-coloured jigsaw algorithm on an $r$-fold graph is
the natural generalisation of the $2$-coloured version in which clusters must be joined
by an edge of each colour in order to merge. A formal description of this algorithm is given
later in Algorithm~\ref{percolation:definition}.

It is easy to see that percolation for $r=1$ is equivalent to connectedness of the graph.
Thus, percolation of the jigsaw process is a generalised
notion of connectedness of multiple graphs on the same vertex set.
Therefore Theorem~\ref{thm:JigsawBRSS} and the main results of this paper.
(Theorems~\ref{MainResultConst} and~\ref{MainResultNonConst})
may be seen as generalisations of the connectedness threshold of Erd\H{o}s and R\'enyi~\cite{connected}.

\subsection{Main theorem}

To state the main result of the paper, we introduce the following generalisation
of the binomial model for random graphs.

\begin{defn}
	An \textit{$r$-fold binomial random graph} $\mathbf{G}(n, p_{1},...,p_{r})$ is an $r$-fold graph
	$([n], E_{1},...,E_{n})$ where \linebreak $([n],E_{i})\sim G(n, p_{i})$ are independent binomial
	random graphs for every $i \in [r].$
\end{defn} 

The following generalisation of Theorem~\ref{thm:JigsawBRSS} is the main result of this paper.

\begin{thm}\label{MainResultConst}
	Let $2 \leq r \in \mathbb{N}$. There exists a constant $C_{r}$ such that the following holds: suppose that
	$p_{1},...,p_{r}$ are functions of $n$ such that $ 0 \leq p_{1}\leq p_{2} \leq ... \leq p_{r}\leq 1$
	and $\mathbf{G} = \mathbf{G}(n, p_{1}, p_{2} ,..., p_{r})$. For $i\in [r]$ let \linebreak
	\textit{$P_{i}$ }$:= p_{1}p_{2}...p_{i}$. Then
	\begin{itemize}
		\item[$(i)$] if $P_{i} \leq \frac{1}{C_{r} n (\ln n)^{i-1}}$ for some $2 \leq i \leq r$ or
		$P_{1} \leq \frac{\ln n}{C_{r}n} $ then whp $\mathbf{G}$ does \emph{not} percolate;
		\item[$(ii)$]if $P_{i} \geq \frac{C_{r}}{n (\ln n)^{i-1}}$ for every $ 2\leq i\leq r $ and
		$P_{1} \geq \frac{C_{r} \ln n}{n},$ then whp  $\mathbf{G}$ percolates.
	\end{itemize}
\end{thm}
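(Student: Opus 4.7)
My plan is to prove Theorem~\ref{MainResultConst} by induction on $r$, with base case $r=2$ supplied by Theorem~\ref{thm:JigsawBRSS}. I treat part~$(i)$ (non-percolation) and part~$(ii)$ (percolation) separately.

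For part~$(i)$, the sub-case $P_1\le \ln n/(C_r n)$ follows from the classical fact that $G_1$ then contains an isolated vertex whp, which blocks all mergers. For the sub-case $P_i\le 1/(C_r n(\ln n)^{i-1})$ with $2\le i<r$, the key observation is a \emph{monotonicity in colours}: the $r$-fold jigsaw process is strictly more restrictive than the $i$-fold process on $(G_1,\ldots,G_i)$, so percolation of $\mathbf{G}$ implies percolation of the $i$-fold sub-problem, which is ruled out by the inductive hypothesis (with $r'=i<r$ and $C_r\ge C_i$). The genuinely new sub-case is $i=r$: the common graph $G_*:=G_1\cap\cdots\cap G_r\sim G(n,P_r)$ lies well below its connectivity threshold, so whp many vertices $v$ are isolated in $G_*$; such a $v$ cannot merge in round~1, and a first-moment calculation shows that the probability it merges with any 2-cluster in round~2 is $O((nP_r)^2)=o(1)$ at the threshold.

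For part~$(ii)$ the inductive hypothesis is of limited use: although it implies percolation of the $(r-1)$-fold sub-problem on $(G_1,\ldots,G_{r-1})$, an $r$-fold merger is strictly more restrictive than an $(r-1)$-fold merger, so the former does not follow from the latter. Instead I would run the $r$-fold process in three phases. In Phase~1, edges of $G_*\sim G(n,P_r)$ form initial 2-clusters; since $\mathbb{E}|E(G_*)|=\Omega(n/(\ln n)^{r-1})$ many initial mergers occur. In Phase~2 I would prove by induction on $k$ that after $k$ merging rounds every vertex lies in a cluster of size at least $L_k$, where the sequence $(L_k)$ is tuned so that the threshold $P_i\ge C_r/(n(\ln n)^{i-1})$ supplies exactly the expansion needed to pass from scale $L_{i-1}$ to scale $L_i$. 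In Phase~3, once a cluster of size $\ge n/2$ exists, any remaining vertex $v$ has neighbours in it in every colour whp: the failure probability in colour $j$ is at most $(1-p_j)^{n/2}\le n^{-C_r/2}$, and a union bound over vertices and colours succeeds for $C_r>2r$.

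The \textbf{main obstacle} is Phase~2 of part~$(ii)$: a multi-scale cluster-expansion argument using all $r$ colours simultaneously is required, since one cannot reduce to two colours by simple intersection (intersecting several colours collapses the logarithmic factors at the wrong rate). I expect the expansion to proceed so that at scale roughly $n/(\ln n)^{i-1}$ colour~$i$ becomes the ``critical'' resource, combined with Chernoff bounds on the number of edges between vertex sets in each $G_i$ and careful handling of the dependencies between colours. A secondary obstacle appears in the $i=r$ sub-case of part~$(i)$: one must iterate the round-2 calculation to show that isolated vertices remain unmergeable in all subsequent rounds as larger clusters develop, which will require adapting the non-percolation analysis of~\cite{jigsawBRSS} from two to $r$ colours.
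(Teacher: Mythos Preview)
Your plan has genuine gaps in both parts, and the approach to part~(ii) points in the wrong direction.

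For part~(ii), Phase~2 is where the real content lies, and your sketch mislocates the difficulty. The bottleneck in the jigsaw process is not spread across a hierarchy of scales $n/(\ln n)^{i-1}$ with different colours becoming critical at each; rather (as already observed in~\cite{jigsawBRSS} and reiterated in the paper's intuition section), the bottleneck sits at a \emph{single} scale $\Theta(\ln n)$. Once a percolating set of size $(\ln n)^{1+1/r}$ exists, growing it to linear size is routine via a doubling argument (the paper's Part~II), and then your Phase~3 matches the paper's Part~III. The hard part is producing one percolating set of that size. The paper does this not by uniform expansion but by a ``1-by-1'' algorithm making $\Theta\bigl(n/(\ln n)^{1+1/r}\bigr)$ independent attempts, each starting from a fresh vertex; each attempt succeeds with probability only $n^{-\Theta(1/c_r)}$, but the number of attempts compensates. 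Your proposed invariant ``after $k$ rounds every vertex lies in a cluster of size $\ge L_k$'' is simply false near the threshold: since $nP_r = O\bigl((\ln n)^{-(r-1)}\bigr)$, almost all vertices are isolated in $G_*$ and remain singletons after round~1, so uniform growth cannot be the mechanism.

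For part~(i), your colour-monotonicity reduction for $2\le i<r$ is valid and clean (the paper makes the same observation in Remark~\ref{FoldConnected}). But for $i=r$, tracking a single vertex $v$ isolated in $G_*$ through later rounds does not close: even if $v$ survives round~2 with probability $1-o(1)$, other clusters may continue to grow, and once some cluster reaches size $k$ with $kp_j\gg 1$ for all $j$, the vertex $v$ will typically be adjacent to it in every colour and get absorbed. So you are forced to show that \emph{no} cluster ever exceeds some size $K=\Theta(\ln n)$, which is exactly the standard first-moment argument on percolating subsets (if $[n]$ percolates then it contains a percolating subset of size in $[K,2K]$, and one bounds the expected number of such subsets). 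The paper follows this route, noting that it is a direct generalisation of the argument in~\cite{jigsawBRSS}; your isolated-vertex approach would collapse into it anyway once you try to control the later rounds.
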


In fact we will prove a slightly stronger result; we allow $r$ tend to infinity sufficiently slowly
as a function of $n$
(then $C_{r}$ also depends implicitly on $n$).
\begin{thm}\label{MainResultNonConst}
	Let $2 \leq r = o(\sqrt{\ln \ln n})$ and $C_{r} := 2^{8r^{2}}$. Then the following holds:
	suppose that $p_{1},...,p_{r}$ are functions of $n$ such that
	$ 0 \leq p_{1}\leq p_{2} \leq ... \leq p_{r}\leq 1$ and $\mathbf{G} = \mathbf{G}(n, p_{1}, p_{2} ,..., p_{r})$.
	Then
	\begin{itemize}
		\item[$(i)$] if $P_{i} \leq \frac{1}{C_{r} n (\ln n)^{i-1}}$ for some $2 \leq i \leq r$ or
		$P_{1} \leq \frac{\ln n}{C_{r}n} $ then whp $\mathbf{G}$ does \emph{not} percolate;
		\item[$(ii)$]if $P_{i} \geq \frac{C_{r}}{n (\ln n)^{i-1}}$ for every $ 2\leq i\leq r $ and
		$P_{1} \geq \frac{C_{r} \ln n}{n},$ then whp  $\mathbf{G}$ percolates.
	\end{itemize}
\end{thm}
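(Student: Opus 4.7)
The plan is to extend the two-colour strategy of~\cite{jigsawBRSS} to general $r$, with the two parts of the theorem handled by quite different methods. I would treat both Theorems~\ref{MainResultConst} and~\ref{MainResultNonConst} simultaneously, keeping all estimates explicit enough in $r$ to allow $r = o(\sqrt{\ln \ln n})$ and $C_r = 2^{8r^2}$.

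\textbf{Part (i) (non-percolation).} This direction rests on first moment estimates. If $P_1 = p_1 \leq \ln n / (C_r n)$ then, by the classical Erd\H{o}s--R\'enyi argument, $G_1$ has an isolated vertex whp; such a vertex can never be merged with anything in the jigsaw process, and so $\mathbf{G}$ does not percolate. For $2 \leq i \leq r$ with $P_i \leq 1/(C_r n (\ln n)^{i-1})$, the strategy is to identify an obstruction in the form of a minimal \emph{internally spanned} set $S$: a subset on which the process, restricted to the edges of each colour induced on $S$, reaches a single cluster. Any internally spanned $S$ of size $k$ must contain at least $k-1$ edges in each colour linking the clusters through the merges, so the probability that a given $k$-set is internally spanned is crudely bounded by $k^{r(k-1)} P_r^{k-1}$ (product of the Cayley-type bounds across colours). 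Since $P_r \leq P_i \leq 1/(C_r n (\ln n)^{i-1})$, summing $\binom{n}{k} k^{r(k-1)} P_r^{k-1}$ over the range of $k$ that could seed percolation and tracking where the bound is tight (around $k \sim r \ln n$) yields $o(1)$, so whp no such droplet exists and the process fails.

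\textbf{Part (ii) (percolation).} This is the harder direction. I would proceed by induction on $r$, the base case $r = 1$ being the Erd\H{o}s--R\'enyi connectivity theorem. For the inductive step I plan to use \emph{sprinkling}: write each $p_j = p_j' + p_j''$ with independent coin flips and treat the two pieces as two separate stages of the same process. In the first stage, the randomness $(p_1', \ldots, p_r')$ is used to build a ``cluster tower,'' in which after exposing colours $1, \ldots, i$ the typical cluster has size $\Omega((\ln n)^{i-1})$ and together the clusters cover all but $o(n)$ vertices. The key analytic input is that two clusters of sizes $s, t$ are joined in colour $i$ with probability $1 - (1-p_i)^{st} \approx 1 - e^{-st p_i}$, and the threshold $P_i \geq C_r/(n(\ln n)^{i-1})$ is designed precisely so that this probability is close to $1$ for clusters at level $i$ of the tower. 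In the second stage, the independent sprinkled randomness $(p_1'', \ldots, p_r'')$ is used to absorb the leftover vertices into the giant cluster, which by then is so large that even a single fresh edge in the rarest colour suffices to attach each residual vertex whp.

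\textbf{Main obstacle.} The hardest step is the bootstrap: starting from singletons, the very first merges require a pair of vertices sharing an edge in \emph{every} one of the $r$ colours, an event of probability only $P_r$ per pair. One must argue that these rare joint edges nevertheless seed enough small clusters to feed into the tower, and that the subsequent levels grow without cluster-pair dependencies spoiling the independence needed for the sprinkling argument. Controlling the constants so that the doubly-exponential-in-$r$ factor $2^{8r^2}$ absorbs every loss in the union bounds, while simultaneously allowing $r$ to grow with $n$, is where the bulk of the technical bookkeeping will lie.
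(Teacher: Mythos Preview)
Your treatment of Part~(i) is in line with the paper: the authors also reduce the subcritical case to the first moment bound on internally spanned sets (generalising the argument of~\cite{jigsawBRSS}), together with the isolated-vertex obstruction when $P_1$ is small. The details you sketch are slightly loose, but the route is the right one.

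Part~(ii), however, has a genuine gap. Your proposed induction on $r$ via a ``cluster tower'' does not respect the definition of the jigsaw process. In the $r$-colour process, two clusters merge only when there is an edge of \emph{every} colour between them simultaneously; there is no meaningful sense in which one ``exposes colours $1,\ldots,i$'' to build level-$i$ clusters and then adds colour $i+1$ to merge them. In particular, a set that percolates in the $i$-colour process need not percolate (even partially) in the $(i{+}1)$-colour process, because each of the intermediate merges within that set now additionally requires a colour-$(i{+}1)$ edge. Your key analytic estimate---that level-$i$ clusters of sizes $s,t$ are joined with probability $1-(1-p_{i+1})^{st}$---is therefore not the relevant quantity; the relevant probability is $\prod_{j=1}^{r}\bigl(1-(1-p_j)^{st}\bigr)$ applied to clusters that are already percolating with respect to \emph{all} $r$ colours. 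This is exactly the bootstrap difficulty you flag at the end, and your tower construction does not resolve it.

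The paper's approach is quite different and avoids induction on $r$ altogether. It uses three rounds of sprinkling and three growth phases. In Phase~I, a \emph{1-by-1 algorithm} tries to grow a percolating set one vertex at a time: at each step it looks for an active vertex joined to the last trial vertex in colour~$1$ and to the current trial set in every other colour. A single round of this succeeds with probability only $n^{-\Theta(1/c_r)}$, but one can afford $\Theta\bigl(n/(\ln n)^{1+1/r}\bigr)$ independent rounds, so whp some round produces a percolating seed of size $t_1=(\ln n)^{1+1/r}$. The delicate point here (absent in the two-colour case) is that for $i\ge 3$ one only has $p_i\le (\ln n)^{-1}$ rather than $(\ln n)^{-2}$, which forces a case split between ``small'' and ``large'' $p_i$ via the index $i_t=\max\{i:1-p_it/3\ge 1/2\}$. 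Phase~II is a \emph{doubling algorithm}: once a percolating set has size $t_1$, the set of new vertices joined to it in every colour has expected size at least twice the current size, so by Chernoff the set doubles whp at each step until it reaches $n/2^{r+2}$. Phase~III then absorbs the remaining vertices in a single step. The constant $C_r=2^{8r^2}$ is calibrated so that the Phase~I success probability $n^{-\Theta(1/c_r)}$, with $c_r=C_r^{1/(r-1)}$, beats the number of available rounds; this, together with $r=o(\sqrt{\ln\ln n})$, is where the doubly-exponential dependence on $r$ enters.
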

Note that both in the proof of Theorem~\ref{thm:JigsawBRSS} in~\cite{jigsawBRSS} and
in the proof of Theorems~\ref{MainResultConst} and~\ref{MainResultNonConst} in this paper,
no attempt is made to optimise the constants $c$ and $C_r$,
and the value given in Theorem~\ref{MainResultNonConst} is probably far from best possible.

\begin{remark}\label{FoldConnected}
	Given an $r$-fold graph $\mathbf{G}=([n], E_{1},...,E_{r})$ it is easy to see that
	percolation of every $i$-fold graph $([n], E_{j_1},...,E_{j_i})$ obtained by
	considering a subset of $i$ colours
	is a necessary condition for percolation of $\mathbf{G}$ (but not sufficient). For $i=1$,
	we guarantee connectedness by taking $p_{1} = P_{1} \geq \frac{C_{r} \ln n}{n}.$
	For $2 \leq i \leq r$
	the inequalities $P_{i}\geq \frac{C_{r}}{n(\ln n)^{i-1}}$ together with
	$p_{1}\leq p_{2} \leq ... \leq p_{r}$
	ensure that every such $i$-fold graph percolates whp.
\end{remark}

\begin{proof}[\bf{Proof of Theorem~\ref{MainResultConst}}]
	Theorem~\ref{MainResultConst} follows immediately from Theorem~\ref{MainResultNonConst}.	
\end{proof}

We will therefore focus on proving Theorem~\ref{MainResultNonConst}. We will only present the proof of the
supercritical case since the proof of the subcritical case is an obvious generalisation of the 
corresponding proof for 2 colours in \cite{jigsawBRSS}. It is a simple first moment argument which we
omit here, see \cite{Thesis} for details.

While much of the proof of the supercritical case follows that in~\cite{jigsawBRSS}, there are
important differences for the multi-coloured case which present additional difficulty.
We will point out these differences in the course of the proof.

\subsection{The multi-coloured jigsaw algorithm.}
The multi-coloured jigsaw process is formally described as follows

\begin{alg}[The multi-coloured jigsaw algorithm]\label{percolation:definition}
	\hspace{8cm} \newline Input: $r$-fold graph $\mathbf{G} :=([n],E_{1}, ... , E_{r})$.
	\newline At time $t \geq 0$ 	there is a partition $\mathcal{C}_{t} =\{C_{t}^{1}, C_{t}^{2}, ...., C_{t}^{k_{t}}\}$
	of the vertex set $[n],$ which we construct inductively as follows:
  	\begin{enumerate}
		\item We take $k_{0}= n$, set $C_{0}^{j}:=\{j\}$ and $\mathcal{C}_{0} =\{ \{1\},\ldots, \{n\}\}$ for all $j \in [n]$ i.e.\ we begin at time $0$ with the discrete partition into single vertices.
		
		\item At time $t \geq 0,$ construct a graph $H_{t}$ on vertex set $\mathcal{C}_{t}$ by joining
		$\mathcal{C}_{t}^{i}$ to          $\mathcal{C}_{t}^{j}$ if there exist edges $e_{s} := \{v_{i,s},v_{j,s} \} \in E_{i} $
		for all $s \in [r]$  such that  $v_{i,s} \in \mathcal{C}_{t}^{i}$ and $v_{j,s} \in \mathcal{C}_{t}^{j}.$
		
		\item If $E(H_{t}) = \varnothing, $ then STOP. Otherwise, construct the partition 
		$$
		\mathcal{C}_{t+1} = \{ C_{t+1}^{1},..., C_{t+1}^{k_{t+1}} \},
		$$
		where $C_{t+1}^{1},..., C_{t+1}^{k_{t+1}}$ are obtained by merging the connected components
		of $H_{t}$ i.e.\ if $D^{i}_{t}\subseteq  \mathcal{C}_{t}$ induces a connected component in $H_{t}$
		then $C^{i}_{t+1} = \bigcup_{C\in D^{i}_{t}} C$.
		
		\item If $|\mathcal{C}_{t+1}|=1$ STOP. Otherwise, go to step 2. 
	\end{enumerate}     
\end{alg}

\begin{defn}\label{percolationdef}
	\begin{itemize}
		\item We say that the $r$-fold graph $\mathbf	{G}=(V, E_{1}, E_{2} ,..., E_{r})$ \textit{percolates} if
		Algorithm~\ref{percolation:definition} applied to $\mathbf{G}$ ends with one single cluster.
		Otherwise we say that $\mathbf{G}$ \textit{does not percolate}.
		\item We say that a subset $W \subseteq V$ is a \textit{percolating subset} (or that it percolates) in
		$\mathbf{G}=(V, E_{1}, E_{2} ,..., E_{r})$ if the \emph{induced $r$-fold subgraph}
		$\mathbf{G}[W]:=(W,E_{1}[W], ..., E_{r}[W])$ percolates.
	\end{itemize}
\end{defn}

The definition of a percolating subset corresponds to the definition in~\cite{jigsawBRSS} of an internally spanned set.

\subsection{Intuition}\label{sec:intuition}

Let us consider heuristically how the jigsaw process might be expected to evolve. For simplicity
we discuss the case $r=2$, although the intuition is transferrable to a larger number of colours.

We begin with $n$ clusters each containing a single vertex. Initially clusters can only merge if there is 
a double-edge (i.e.\ both a red and a blue edge) between the corresponding vertices. Although
such double-edges are rare, the fact that there are many vertices will mean that some clusters
will indeed merge.

Subsequently clusters may continue to merge and grow larger. Indeed, the larger a cluster becomes,
the more likely it is to merge with other clusters and continue growing. Thus we might expect that
after a certain size we encounter a snowball effect, and the growth of the largest cluster accelerates
until it contains all vertices.

Indeed, this intuition turns out to be correct: there is a \emph{bottleneck} in the percolation process,
which occurs at size $\Theta(\ln n)$ (this was observed by Bollob\'as, 
Riordan, Slivken and Smith in \cite{jigsawBRSS}). More precisely, in the subcritical case we show that
the largest cluster in the percolation process will not exceed size $\ln n$ whp. On the other hand,
in the proof of the supercritical process, the hardest part is proving that there is a cluster of size
slightly larger than $\ln n$ - then it is fairly easy to prove that this cluster will eventually merge with
all other clusters whp, and therefore we have percolation.

We will ignore floors and ceilings throughout the paper whenever they do not
significantly affect the arguments (this is usually the case since we consider
graphs on n vertices, where $n \rightarrow \infty$). We also assume that $n$
is sufficiently large in calculations.

\section{Proof of the supercritical case.}
\label{supercritical}

In this section we will prove part~(ii)  of Theorem~\ref{MainResultNonConst}.
The main idea for the proof is to construct an increasing sequence of percolating subsets
$V_{1} \subseteq V_{2} \subseteq V_{3}=V$. Therefore we will divide the proof into three
parts, and we aim to prove the following:

\begin{flushright}
	\begin{itemize}
		\item[] Part I: whp there is a percolating subset $V_{1}\subseteq V$ of size at least
		\textit{$t_{1}$}$:=(\ln n)^{1 + \frac{1}{r}}$;
		
		\item[] Part II: conditioned on the existence of a percolating subset $V_{1}\subseteq V$
		 of size at least $t_{1},$  whp there exists a percolating subset $V_{2} \supset V_{1}$ 
		 of size at least $\frac{n}{2^{r+2}}$;
		
		\item[] Part III: conditioned on the existence of a percolating subset $V_{2}$ of size at
		 least $\frac{n}{2^{r+2}},$ whp the whole set V percolates.
	\end{itemize} 
\end{flushright}

The independence between the three parts
of the proof is guaranteed by independent
rounds of exposure.
More precisely, let 
$\mathbf{G}^{(j)}:= ([n], E^{(j)}_{1}, E^{(j)}_{2},..., E^{(j)}_{r})\sim 
G(n,\frac{p_1}{3},\frac{p_2}{3},\ldots,\frac{p_r}{3})$
independently for $j=1,2,3$. Then we will view $\mathbf{G}$ as the union 
$\mathbf{G}^{(1)}\cup \mathbf{G}^{(2)} \cup \mathbf{G}^{(3)}$.
\footnote{Note that this is not quite true, since the union of three independent 
copies of $G(n,p/3)$ is distributed as $G(n,p^*)$, where $p^*=p-p^2/3+p^3/9$. 
However, since $p^*<p$ we can couple $G(n, p^\ast)$ with $G(n, p)$ such that $G(n, p^\ast) \subseteq G(n, p)$, 
and since percolation is a monotone increasing property, this will be sufficient.}

In Part~$j$ of the proof we will work only with $\mathbf{G}^{(j)}$, effectively \emph{exposing} 
an $r$-fold probability of \linebreak[4]
 $(p_1/3,p_2/3,\ldots,p_r/3)$ in each round.

\subsection{Preliminaries}

We begin with some basic observations.

\begin{prop}\label{p2p}
	Let $r, C_{r}, p_{1} ,p_{2}, ... , p_{r}$ satisfy the conditions of Theorem~\ref{MainResultNonConst}~$(ii)$. Then for $n$ large enough there exist real numbers $0 \leq p_{1}'\leq p_{2}' \leq ... \leq p_{r}' \leq 1$ that also satisfy conditions of Theorem~\ref{MainResultNonConst}~$(ii)$ and such that
	\begin{itemize}
		\item $p_{i}' \leq p_i$ for every $i,$
		\item $p_{1}' p_{2}' ... p_{r}' = \frac{C_{r}}{n (\ln n)^{r-1}}.$
	\end{itemize}
\end{prop}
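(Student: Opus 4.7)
The plan is to leave the smaller $p_i$'s unchanged and decrease only the largest ones, equalising them at a common value $q$. Set $T := \frac{C_r}{n (\ln n)^{r-1}}$, and adopt the conventions $p_0 := 0$, $P_0 := 1$. For each $k \in \{1,\ldots,r\}$, consider the candidate tuple $p_i' = p_i$ for $i \le r-k$ and $p_i' = q$ for $i > r-k$, with $q$ ranging over $[p_{r-k}, p_{r-k+1}]$; its product $P_{r-k} q^k$ sweeps the interval $[P_{r-k} p_{r-k}^k,\, P_{r-k} p_{r-k+1}^k]$. Since $P_{r-k-1} p_{r-k}^{k+1} = P_{r-k} p_{r-k}^k$, consecutive intervals share an endpoint, and their union is $[0, P_r]$. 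The hypothesis gives $T \le P_r$, so there exist $k$ and $q$ with $P_{r-k} q^k = T$, and I use these to define $(p_1',\ldots,p_r')$.

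The ordering $p_1' \le \cdots \le p_r'$, the bounds $p_i' \le p_i$, and the equality $p_1' \cdots p_r' = T$ are immediate from the construction. The remaining task is to verify $P_i' \ge \frac{C_r}{n(\ln n)^{i-1}}$ for $2 \le i \le r$ and $P_1' \ge \frac{C_r \ln n}{n}$. For $i \le r-k$ this is free because $P_i' = P_i$ and the original tuple already satisfies the hypothesis. For $i > r-k$, I write $P_i' = P_{r-k} q^{i-r+k}$ with $q^k = T/P_{r-k}$ and substitute the assumed lower bound on $P_{r-k}$; a short exponent computation shows that the resulting quantity matches the required lower bound exactly when $P_{r-k}$ attains its lower bound, so the inequality holds in general.

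The only situation where the hypothesis $r = o(\sqrt{\ln \ln n})$ is actively needed is the edge case $k = r$, where every coordinate becomes $q = T^{1/r}$ and the bound $P_1' = q \ge C_r \ln n / n$ must be checked directly. After substituting $C_r = 2^{8r^2}$, this reduces to an inequality of the shape $\ln n \gtrsim 8 r^2 \ln 2$, which is valid for $n$ sufficiently large given the growth condition on $r$. The main obstacle is therefore not conceptual but purely bookkeeping: choosing the right partition into a frozen block and an equalised block, and noticing that equality in the assumed lower bounds lines up exactly with equality in the desired lower bounds, leaving no slack to lose.
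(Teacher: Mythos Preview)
Your argument is correct in substance. The paper itself omits the proof entirely (deferring to a thesis), so there is no in-paper argument to compare against; your ``freeze the bottom block, equalise the top block at a common value $q$'' construction is natural and works.

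Two minor points worth tightening. First, your claim that the exponent computation matches the required lower bound \emph{exactly} is literally true only for $r-k \ge 2$, where the hypothesis reads $P_{r-k} \ge C_r/(n(\ln n)^{r-k-1})$. When $r-k = 1$ the hypothesis is the stronger $P_1 \ge C_r \ln n/n$; substituting this yields strict inequality for $2 \le i < r$ (equality only at $i = r$). The desired bound still holds, of course---indeed, if you instead substitute the weaker consequence $P_1 \ge C_r/n$, the computation \emph{does} match exactly, so one can simply phrase the general step via the uniform formula $P_j \ge C_r/(n(\ln n)^{j-1})$ valid for all $j \ge 1$.

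Second, in the edge case $k = r$ you mention only the check $P_1' = q \ge C_r \ln n/n$, but the conditions $P_i' = q^i \ge C_r/(n(\ln n)^{i-1})$ for $2 \le i \le r$ also require verification there, and they do \emph{not} follow from the $P_1'$ bound alone. The missing check is just as routine: it reduces to $n \ge C_r \ln n$, i.e.\ $\ln n - \ln\ln n \ge 8r^2 \ln 2$, which again holds for $n$ large under $r = o(\sqrt{\ln\ln n})$. So nothing is broken, but your sentence ``the only situation where the hypothesis \ldots\ is actively needed'' should acknowledge that both the $P_1'$ and the $P_i'$ checks in this edge case invoke the growth condition on $r$.
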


\noindent{We omit the proof of this intuitively obvious result -- for details see \cite{Thesis}.}

Since percolation is a monotone property, by Proposition~\ref{p2p} we may assume that
\begin{equation}\label{eq:productbound}
P_{r}= p_{1}...p_{r}= \frac{C_{r}}{n (\ln n)^{r-1}}.
\end{equation}
From this, and recalling that $p_{1} \geq \frac{C_{r}\ln n}{n}$, we can deduce that
\begin{equation}\label{eq:boundp2}
p_{2} \leq \left(\frac{p_{1}p_{2}p_{3}...p_{r}}{p_{1}}\right)^{1/(r-1)} \leq \left( \frac{1}{\ln n} \right)^{\frac{r}{r-1}}.
\end{equation}

\begin{remark}
	In the two-coloured case, i.e.\ $r=2$, we obtain the bound $p_1\le p_2 \le (\ln n)^{-2}$. 
	In the general case, the analogous calculation only yields 
	the bound $p_i \le (\ln n)^{-1}$ (for $i \geq 3$). This seemingly minor
	difference leads to significant extra difficulties, as some approximations are
	no longer valid. We will therefore have to distinguish between ``small'' and ``large'' $p_i$ 
	(see Lemmas~\ref{lemma:azul} and~\ref{lemma:partido} in Section~\ref{part1}).
\end{remark}

\subsection{Part I}\label{part1}

We will construct a large percolating subset $V_{1}$ by ``trial and error''. 
Algorithm~\ref{1-1algorithm} will start from a single vertex and add one vertex 
at a time in an attempt to construct $V_{1}$. We will make several attempts to construct $V_{1}$ -- each such attempt is called a \textit{round}; each round consists of a number of \textit{steps}. We divide the proof into two stages:

\begin{itemize}
	\item[I.a] First, we will bound from below the probability that the algorithm 
	constructs a percolating subset of size at least $t_{0}:= \frac{\ln n}{c_{r}}$ 
	(in one round, see Lemma~\ref{lemma:tiempo0}) where $c_{r} := C^{\frac{1}{r-1}}_{r}$. 
	\item[I.b] Second, conditioned on the algorithm constructing a percolating subset 
	of size at least $t_{0},$ we will bound from below the probability that the algorithm 
	constructs a percolating subset of size at least $t_{1}= (\ln n)^{1+\frac{1}{r}}$ 
	(in one round, see Lemma~\ref{lemma:tiempo1}). 
\end{itemize}

The probability that Algorithm~\ref{1-1algorithm} reaches $t_{1}$ in one round
is bounded from below by the product of the probabilities of the two stages.
This product turns out to be small, but crucially
Algorithm~\ref{1-1algorithm} makes many attempts to reach $t_{1}$.
The probability that at least one of these rounds succeeds will be large
(see Lemma~\ref{main}).

In step $t$ of round $k$ of Algorithm~\ref{1-1algorithm}, we have a trial set $X^{t}_{k}$ which is a percolating set. If the algorithm 
finds a suitable vertex to add to the trial set $X^{t}_{k}$, we create the new trial set $X^{t+1}_{k}$ 
and proceed to step $t+1$ of round $k$. If not, we discard the vertices of the trial set $X^{t}_{k}$ 
and begin the new round $k+1$. We stop if a round has reached step $t_{1}$ or if we have had $\frac{n}{2t_1}$ rounds.

The formal description of the algorithm is as follows:

\begin{alg}	[The 1-by-1 algorithm] \label{1-1algorithm}
	The algorithm is divided into rounds, indexed by k, and each round is divided into steps, indexed by t. At the start of the $k$-th round there is a set $A^{0}_{k} \subseteq [n]$ of active vertices and a set $D_{k} \subseteq [n]$ of discarded vertices. We begin with $A^{0}_{1} = [n]$ and $D_{1}= \varnothing.$ The procedure of the $k$-th round is as follows:
	\vspace{.3cm}
	
	\noindent{At the start of the $t$-th step of the k-th round there are sets of trial and dormant vertices:}
	\begin{center}
		\hspace{.5cm} $\bullet$ $X_{k}^{t} = \{ x_{k}^{1},x_{k}^{2}, ... ,x_{k}^{t}\}\subseteq A_{k}^0 $ (trial vertices); \hspace{1cm}
		$\bullet$ $U_{k}^{t} \subseteq A_{k}^{0}$ (dormant vertices),
	\end{center}
	
	\noindent{where $A_{k}^{0} = X_{k}^{t} \, \dot{\cup} \, A_{k}^{t} \, \dot{\cup} \, U_{k}^{t}.$ }

		\item[(1)] For $t = 0,$ we move an arbitrary active vertex $x_{k}^{1} \in A_{k}^{0}$ to the trial set:
		\begin{center}
			$\bullet$ $ X_{k}^{1} := \{x_{k}^{1}\}$; \hspace{1cm}
			$\bullet$ $ U_{k}^{1} := \varnothing$; \hspace{1cm}
			$\bullet$ $ A_{k}^{1} := A_{k}^{0} \backslash {x^{1}_{k}}$; \hspace{1cm}
			$\bullet$ $ R_{k}^{0} := \varnothing$,
		\end{center}
	    and set $t := 1.$
		
		\item[(2)] For $t \geq 1,$ we reveal all edges of $E_{1}^{(1)}$ between $A_{k}^{t}$ and $x_{k}^{t}$ and edges of $E_{i}^{1}$ ($i= 1,\ldots,r$) between any neighbour of $x_{k}^{t}$ in $E_{1}^{(1)}$ and $x_{k}^{1}, \ldots,x_{k}^{t}$. Let\\
				$\bullet$ $R_{k}^{t} := \{x \in A_{k}^{t}: xx_{k}^{t} \in E_{1}^{(1)}\}$;\\
				$\bullet$ $B_{k}^{t} := \{ x \in R_{k}^{t} : \textrm{for every $i \in \{2,3,...,r\}$ there exists $s_{i} \leq t$ such that } xx_{k}^{s_{i}} \in E_{i}^{(1)} \}$.

			\item[(3)] If $B_{k}^{t} \neq \varnothing, $ then let $x_{k}^{t+1}$ be an arbitrary element of $B_{k}^{t}.$ Then set:
			\begin{center}
				$\bullet$ $X_{k}^{t+1}:= X_{k}^{t} \cup \{ x^{t+1}_{k} \}$; \hspace{1cm}
				$\bullet$ $A_{k}^{t+1} := A_{k}^{t} \backslash R_{k}^{t}$;  \hspace{1cm}
				$\bullet$ $U_{k}^{t+1} := U_{k}^{t} \cup \left( R_{k}^{t} \backslash \{ x_{k}^{t+1} \} \right)$.
			\end{center}
		 If $t \geq t_{1} = (\ln n)^{1+ \frac{1}{r}}$ then STOP, otherwise set $t=t+1$ and go to step (3).
		
		\item[(4)] If $B_{k}^{t}= \varnothing,$ then set
		\begin{center}
			$\bullet$ $A_{k+1}^0:= A_{k}^0 \backslash X_{k}$; \hspace{.5cm}
			$\bullet$ $D_{k+1}:=D_{k} \cup X^{t}_{k}$.
		\end{center}
		\item[(5)] If		
		\begin{equation*}
		k \geq \frac{n}{2(\ln n)^{1 + \frac{1}{r}}} 
		\end{equation*}
		
		then STOP, otherwise set $k:= k+1$ and $t:=0,$ and go to step (1).

\end{alg}

We reveal edges and non-edges as they are exposed in the algorithm, e.g.\ when defining 
$R_{t}^{k}$ we \textit{test} each pair $(x_{0}^{k}, a)$ for $a \in A_{k}^{t}$ to reveal 
whether it lies in $E_{1}^{(1)}$. Note that since every tested pair has at least 
one of its endpoints in the trial set, we guarantee independence between rounds by 
discarding the trial set at the end of each round. We also have independence within 
each round, because  no pair is tested twice within a round.

	Since we consider at most $n / (2 (\ln n)^{1+\frac{1}{r}})$ rounds, and stop each with a trial set of size at most $(\ln n)^{1 + \frac{1}{r}} $ vertices,  we start each new round with at least $n/2$ vertices, i.e.

	\begin{equation*}
	| A_{k}^{0} | \geq \frac{n}{2}.
	\end{equation*}

We will need the following definitions:

\begin{defn}
	\begin{itemize}
		\item Let $\mathcal{X}_{k}^{t}$ be the event that $X_{k}^{t}$ is defined (i.e.\ we reach step $t$ in round $k$). 
		\item Let $\mathcal{S}_{k}^{t}:=\{ |R_{k}^{s}| \leq \frac{n}{4  t_{1}} \textrm{ for } s = 0,1,2,...,t \}.$ 
		\item Let $ \Y{k}{t} := \mathcal{X}_{k}^{t}\cap\mathcal{S}_{k}^{t}.$
		\item Let $r_{k}^{t}:= \mathbb{P}\left[ \Y{k}{t} \big| \Y{k}{t-1} \right]$ for $k \leq n/( 2(\ln n)^{1 +\frac{1}{r} })$ and $t \geq 1.$ 
	\end{itemize}
\end{defn}

		The event $\mathcal{X}_{k}^{t}$ 	means that we found a percolating subset of size $t$ formed with only edges of the first 				round of exposure. Conditioned on getting to round $k$ the event $\mathcal{X}_{k}^{1}$ always holds. For $t \geq 2$ 		the event $					\mathcal{X}_{k}^{t}$ is equivalent to the event that $B_{k}^{t-1}$ is non-empty. 
		The event $\mathcal{S}_{k}^{t}$ guarantees that within a round k, we do not discard too many vertices by step $t$. More 					specifically, if the event $	\Y{k}{t-1}$ holds, we have
		\begin{equation*}	
		|A_{k}^{t}| \geq  |A_{k}^{0}| - (t-1)\frac{n}{4t_1}  \geq \frac{n}	{2} - \frac{t}{t_{1}} \frac{n}{4} \geq \frac{n}{4}.
		\end{equation*}
		Note that if we get to round k, the event $\mathcal{S}_{k}^{0}$ always holds,
		since $R_k^0=\emptyset$.

We will use the following easily verified inequalities to approximate some expressions.

\begin{fact}\label{claim:pt-bounds}
	\label{talacha}
	For $t \geq 0,$ $ p \leq 1 $ we have
	\begin{itemize}
		\item[a)] If $ 1-pt \geq 0$ then $1 - (1-p)^{t} \geq pt(1-pt);$ 
		\item[b)] If $ 1-pt \leq \frac{1}{2}$  then  $ 1-(1-p)^{t} \geq \frac{1}{5}.$
	\end{itemize}
\end{fact}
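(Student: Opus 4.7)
My plan is to prove the two parts separately; both reduce to one-line applications of standard tools, so I would present them back-to-back.

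For part (a), the cleanest route is probabilistic: $1-(1-p)^t$ is the probability that at least one of $t$ independent Bernoulli$(p)$ trials succeeds, and Bonferroni's second-order lower bound gives
\[
1-(1-p)^t \;\geq\; tp - \binom{t}{2}p^2 \;\geq\; tp - \tfrac{t^2 p^2}{2} \;\geq\; tp - t^2 p^2 \;=\; tp(1-tp),
\]
which is exactly what is claimed (the hypothesis $1-pt\geq 0$ is only needed to make the right-hand side non-negative, but the chain of inequalities itself holds regardless). An elementary alternative is induction on $t$, using $1-(1-p)^{t+1} = p + (1-p)(1-(1-p)^t)$ and checking that the resulting polynomial inequality reduces to a trivial non-negativity statement.

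For part (b), the hypothesis $1-pt\leq \tfrac{1}{2}$ is simply $pt\geq \tfrac{1}{2}$. Combining the universal estimate $1-p\leq e^{-p}$ (valid for all $p\in[0,1]$) with this lower bound yields
\[
(1-p)^t \;\leq\; e^{-pt} \;\leq\; e^{-1/2},
\]
so $1-(1-p)^t \geq 1 - e^{-1/2}$. It then suffices to observe $1-e^{-1/2} \geq \tfrac{1}{5}$, equivalently $e^{1/2}\geq \tfrac{5}{4}$, which follows from $e > (5/4)^2 = 25/16$.

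The main obstacle is essentially none: each part is a one-line consequence of a familiar inequality (Bonferroni in (a), $1-p\leq e^{-p}$ in (b)). The only minor subtlety worth flagging is that the probabilistic form of the Bonferroni argument is cleanest when $t$ is a positive integer, which is always the situation in which the fact is invoked later in the paper, since $t$ is a step index of Algorithm~\ref{1-1algorithm}. For fully general real $t\geq 0$ one could instead use induction, differentiate both sides in $p$ and check non-negativity at $p=0$, or appeal to a generalised Bernoulli inequality; none of these change the substance of the argument.
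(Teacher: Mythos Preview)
Your proof is correct. The paper itself does not supply a proof of this fact at all; it simply labels the two inequalities as ``easily verified'' and moves on. Your Bonferroni argument for (a) and the $1-p\le e^{-p}$ argument for (b) are exactly the kind of one-line justifications the authors had in mind, and your remark that the inclusion--exclusion form is most natural for integer $t$ (which is the only case used, since $t$ indexes steps of Algorithm~\ref{1-1algorithm}) is apt.
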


Note that a) was used in \cite{jigsawBRSS}, but that b) is only needed for the multi-coloured case. 
We will also use the following observation: for $t \leq t_{1}$,
\begin{equation}\label{eq:p2t}
	p_{1} t \leq p_{2} t_{1} \stackrel{\eqref{eq:boundp2}}{\leq} \frac{(\ln n)^{1+ \frac{1}{r}}}{ (\ln n)^{\frac{r}{r-1}}} = (\ln n)^{ \frac{1}{r} - \frac{1}{r-1}}  = o(1).
\end{equation}

The following parameter will help us distinguish between ``small'' and ``large'' $p_{i}$'s, 
something that is not needed in the 2-coloured case since both $p_{1}$ and $ p_{2}$ are ``small''.

\begin{defn}\label{i_t}
	For $t \leq t_{1}$, let $i_{t}:= \max\{ i \in [2,r] : 1-\frac{p_{i}t}{3} \geq \frac{1}{2} \}$. 
\end{defn}

Note that by (\ref{eq:p2t}), $i_{t}$ is well defined.

We now calculate a lower bound on the probability of ``one-step success''
i.e.\ the probability of being able to add a vertex to the percolating set
in Algorithm~\ref{1-1algorithm}. Recall that $P_{i}= p_{1}p_{2}...p_{i}$
for each $ 1 \leq i \leq r $.

\begin{lemma} \label{lemma:azul}
	For $n$ large enough and $1 \leq t \leq t_{1} = (\ln n)^{1+\frac{1}{r}}$ 
	we have that independently for each $x \in \mathcal{A}_{k}^{t}$ the following holds:
	\begin{equation*}
	\mathbb{P}[x \in B_{k}^{t}] 
	\geq  \left( \frac{1}{5} \right)^{r-1} \frac{P_{i_{t}}}{3^{i_t}} t^{i_{t}-1}.
	\end{equation*}

\end{lemma}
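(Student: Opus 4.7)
The plan is straightforward: compute the probability directly from independence and then bound each factor in the resulting product.

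First, for a fixed $x \in A_k^t$, I would decompose the event $\{x \in B_k^t\}$ into the event that $\{x, x_k^t\} \in E_1^{(1)}$, together with, for each $i \in \{2, \ldots, r\}$, the event that at least one of the pairs $\{x, x_k^1\}, \ldots, \{x, x_k^t\}$ lies in $E_i^{(1)}$. All of these pairs are distinct (either the endpoint $x_k^s$ differs or the colour does), and in the first round of exposure each such edge is present with probability $p_i/3$ independently, so the factors multiply:
\begin{equation*}
\mathbb{P}[x \in B_k^t] = \frac{p_1}{3} \prod_{i=2}^{r} \left(1 - \left(1 - \frac{p_i}{3}\right)^t\right).
\end{equation*}
The same disjointness of pair–colour combinations also delivers the independence across distinct $x \in A_k^t$ claimed in the lemma.

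Next I would bound each factor of the product, splitting at the index $i_t$ from Definition~\ref{i_t}. For $2 \le i \le i_t$ the definition gives $1 - p_i t/3 \ge 1/2$, so Fact~\ref{claim:pt-bounds}(a) applied to $p = p_i/3$ yields $1 - (1 - p_i/3)^t \ge (p_i t/3)(1 - p_i t/3) \ge p_i t/6$. For $i_t < i \le r$ we have $1 - p_i t/3 < 1/2$, so Fact~\ref{claim:pt-bounds}(b) yields $1 - (1 - p_i/3)^t \ge 1/5$. Writing $P_{i_t}/p_1 = p_2 \cdots p_{i_t}$, this produces
\begin{equation*}
\mathbb{P}[x \in B_k^t] \ge \frac{p_1}{3} \cdot \prod_{i=2}^{i_t} \frac{p_i t}{6} \cdot \prod_{i=i_t+1}^{r} \frac{1}{5} = \frac{P_{i_t}\, t^{i_t-1}}{3 \cdot 6^{i_t-1} \cdot 5^{r-i_t}}.
\end{equation*}

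Finally I would tidy the constants. Since $6^{i_t-1} = 2^{i_t-1} \cdot 3^{i_t-1}$ and $2^{i_t-1} \le 5^{i_t-1}$, the denominator is at most $3^{i_t} \cdot 5^{r-1}$, giving exactly the stated bound $(1/5)^{r-1} (P_{i_t}/3^{i_t})\, t^{i_t-1}$. There is no real obstacle here: the lemma is an elementary manipulation of an explicit product, and the only content that goes beyond the two-colour case of~\cite{jigsawBRSS} is the use of Fact~\ref{claim:pt-bounds}(b) for those indices $i > i_t$ at which $p_i t$ is no longer small, as flagged in the remark following~\eqref{eq:boundp2}.
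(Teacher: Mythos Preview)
Your proof is correct and follows essentially the same approach as the paper's: both start from the exact expression $\tfrac{p_1}{3}\prod_{j=2}^r\bigl(1-(1-p_j/3)^t\bigr)$, split the product at $i_t$, apply Fact~\ref{claim:pt-bounds}(a) together with $1-p_jt/3\ge 1/2$ for small indices and Fact~\ref{claim:pt-bounds}(b) for large indices, and then absorb the resulting factors of $(1/2)^{i_t-1}$ into $(1/5)^{i_t-1}$ to reach the stated bound. The only cosmetic difference is that you fold the $1/2$ into $p_it/6$ immediately, whereas the paper carries the factor $(1/2)^{i_t-1}$ separately before making the same replacement.
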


\begin{proof}[\textbf{Proof}] We have
	\begin{align*}
	\mathbb{P}[x \in B_{k}^{t}] = \frac{p_{1}}{3} \prod_{j=2}^{r} \left(1-\left(1-\frac{p_{j}}{3}\right)^{t}\right)
	&\stackrel{\mbox{\footnotesize (F.~\ref{claim:pt-bounds})}}{\geq }
	\frac{p_{1}}{3} \left(\frac{1}{5}\right)^{r-i_{t}} \prod_{j=2}^{i_{t}}\frac{p_{j}t}{3}\left(1-\frac{p_{j}t}{3}\right)\\
	&\geq 
	\left( \frac{1}{5} \right)^{r-i_{t}} \frac{P_{i_{t}}}{3^{i_t}} t^{i_{t}-1} \left(\frac{1}{2}\right)^{i_{t}-1}
	\geq 
	\left( \frac{1}{5} \right)^{r-1} \frac{P_{i_{t}}}{3^{i_t}} t^{i_{t}-1}.\qedhere
	\end{align*} 
\end{proof}

We now make use of the lower bound of Lemma~\ref{lemma:azul} and the fact that
the events $\{ x \in B_{k}^{t} \}$ are independent
for different vertices $x$. Recall that $c_r:=C_r^{\frac{1}{r-1}}$.

\begin{lemma}
	\label{lemma:partido}
	
	For  $ 1 \leq t \leq t_{1}= (\ln n)^{1+\frac{1}{r}}$ and $n$ large enough the following holds
	\begin{itemize}
		\item[(a)] $\mathbb{P}\left[ \Y{k}{t} \big|\Y{k}{t-1} \right]
		\geq 1 - \exp \left( - \left(\frac{1}{16}\right)^{r}  \left(\frac{c_{r}t}{\ln n}\right)^{i_{t}-1}\right).$
		\item[(b)] If $\left(\frac{1}{16}\right)^{r} \left(\frac{c_{r}t}{\ln n}\right)^{i_{t}-1} \leq 1$ we have
		$$
		\mathbb{P}\left[ \Y{k}{t} \big|\Y{k}{t-1} \right] \geq \left(\frac{1}{32}\right)^{r} \left(\frac{c_{r}t}{\ln n}\right)^{i_{t}-1}.
		$$
	\end{itemize}
\end{lemma}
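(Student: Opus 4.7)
The plan is to decompose the success event $\Y{k}{t}$ (conditioned on $\Y{k}{t-1}$) into two near-independent pieces: (i) successfully extending the trial set, which given $\Y{k}{t-1}$ amounts to $B_{k}^{t-1}\neq\varnothing$, and (ii) the freshly revealed neighbourhood satisfying $|R_{k}^{t}|\leq n/(4t_{1})$. Conditional on $\Y{k}{t-1}$, these two events depend on disjoint sets of revealed edges, so it suffices to bound each failure probability separately and then combine them via a union bound on the complementary event.

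For piece (i), I would use the observation (made just before the lemma) that $|A_{k}^{t-1}|\geq n/4$ whenever $\Y{k}{t-1}$ holds, together with the fact that the events $\{x\in B_{k}^{t-1}\}$ for distinct $x\in A_{k}^{t-1}$ are mutually independent, since they depend on disjoint sets of candidate edges. Applying Lemma~\ref{lemma:azul} and the inequality $(1-q)^{N}\leq\exp(-qN)$ then gives
$$
\mathbb{P}\bigl[B_{k}^{t-1}=\varnothing \,\big|\, \Y{k}{t-1}\bigr] \;\leq\; \exp\!\Bigl(-\tfrac{n}{4}\bigl(\tfrac{1}{5}\bigr)^{r-1}\tfrac{P_{i_{t}}}{3^{i_{t}}}\,t^{i_{t}-1}\Bigr).
$$
Substituting the supercritical lower bound $P_{i_{t}}\geq C_{r}/(n(\ln n)^{i_{t}-1})=c_{r}^{r-1}/(n(\ln n)^{i_{t}-1})$, collecting the resulting factors, and using $c_{r}\geq 1$ together with $i_{t}\leq r$, this should simplify to $\exp(-(1/16)^{r}(c_{r}t/\ln n)^{i_{t}-1})$, which is the bound claimed in part~(a).

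For piece (ii), $|R_{k}^{t}|$ is stochastically dominated by $\mathrm{Bin}(n,p_{1}/3)$, whose expectation $np_{1}/3$ is $o(n/t_{1})$ by~\eqref{eq:boundp2}. A standard Chernoff upper-tail bound therefore shows that $\mathbb{P}[|R_{k}^{t}|>n/(4t_{1})\mid\Y{k}{t-1}]$ is super-polynomially small in $n$, and in particular is dominated by the bound from piece~(i), whose exponent is at most polylogarithmic in $n$. A union bound on the failure event then establishes part~(a). Part~(b) follows from part~(a) by the elementary inequality $1-e^{-x}\geq x/2$ valid for $0\leq x\leq 1$, the extra factor $1/2$ being absorbed via $\tfrac{1}{2}(1/16)^{r}\geq(1/32)^{r}$.

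The main technical obstacle is the constant-chasing: one has to verify that $(1/16)^{r}$ really does absorb the various extra factors such as $1/4$, $(1/5)^{r-1}$, $3^{i_{t}}$, and $c_{r}^{r-i_{t}}$ arising in the calculation, and that the Chernoff contribution truly does not weaken the leading bound. A minor subtlety is that Lemma~\ref{lemma:azul} is most naturally applied at step $t-1$, with index $i_{t-1}$ and parameter $t-1$ rather than the $i_{t}$ and $t$ appearing in the statement; since $t\mapsto i_{t}$ is non-increasing by Definition~\ref{i_t}, replacing $i_{t-1}$ by $i_{t}$ (and $t-1$ by $t$) only affects the bound by a multiplicative constant that is readily absorbed into the $(1/16)^{r}$.
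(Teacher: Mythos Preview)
Your proposal is correct and follows essentially the same approach as the paper: the same union-bound decomposition $\mathbb{P}[\Y{k}{t}\mid\Y{k}{t-1}]\geq 1-\mathbb{P}[\overline{\mathcal{X}_k^t}\mid\Y{k}{t-1}]-\mathbb{P}[\overline{\mathcal{S}_k^t}\mid\Y{k}{t-1}]$, the same use of Lemma~\ref{lemma:azul} together with $|A_k^{t-1}|\geq n/4$ for the first term, and the same deduction of~(b) from~(a) via $1-e^{-x}\geq x/2$. The only differences are cosmetic: the paper bounds $\mathbb{P}[\overline{\mathcal{S}_k^t}]$ by a first-moment argument over subsets of size $n/(4t_1)$ (yielding $e^{-\sqrt{n}}$) rather than Chernoff, and it obtains $(1/15)^r$ from piece~(i) before degrading to $(1/16)^r$ after absorbing the $e^{-\sqrt{n}}$ term---you are also slightly more careful than the paper in flagging the $t$ versus $t-1$ shift when invoking Lemma~\ref{lemma:azul}.
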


\begin{proof}[\textbf{Proof}]
	Since case (b) follows from case (a) and the inequality $1 - \frac{x}{2} \geq \exp( -x) $, 
	valid for $x \in [0,1],$ we only need to prove case (a).
	
	We recall that $\mathcal{Y}^{t}_{k} = \mathcal{X}^{t}_{k} \cap \mathcal{S}^{t}_{k}$, and therefore
	
	\begin{equation}\label{eq:LowerBoundY}
	\mathbb{P}\left[  \Y{k}{t} \big| \Y{k}{t-1} \right]
	\geq 
	1 - \mathbb{P}\left[ \overline{\mathcal{X}_{k}^{t}} \big| \Y{k}{t-1} \right]
	- \mathbb{P}\left[ \overline{\mathcal{S}_{k}^{t}} \big| \Y{k}{t-1} \right].
	\end{equation}
	We bound the two probability terms on the right-hand side of the inequality separately.
	
	Let $\mathcal{Z}_{k}^{t}$ be the random variable that represents the number of sets $Z$ of size 
	$ \frac{n}{4t_{1}} $ such that $Z \subseteq R^{t}_{k}$. If $\bar{S}^{t}_{k} \cap \Y{k}{t-1}$ holds 
	then $\mathcal{Z}^{t}_{k} \geq 1$, therefore we deduce using Markov's inequality that
	\begin{equation}\label{eq:LowerBoundS}
	\mathbb{P}\left[  \overline{\mathcal{S}_{k}^{t}} \big| \Y{k}{t-1} \right]
	\leq \mathbb{E}\left[ \mathcal{Z}_{k}^{t}  \Big| \Y{k}{t-1} \right] 
	\leq {\binom{n}{\frac{n}{4 t_{1}}}} \left(\frac{p_1}{3}\right)^{\frac{n}{4 t_{1}}}  
	\leq 
	\left( \frac{4}{3} e t_{1} p_{1} \right)^{\frac{n}{4 t_{1}}} 
	\leq e^{-\sqrt{n}}.
	\end{equation}
	For the last inequality we used that $n/(4t_{1}) \geq \sqrt{n}$ and
	$p_{1}t_{1} \stackrel{\eqref{eq:p2t}}{=} o(1) \leq \frac{3}{4e^2}$ for $n$ large enough.

	For the second term in \eqref{eq:LowerBoundY}, we use Lemma~\ref{lemma:azul} and the observation that 
	$|A_{k}^{t-1}|\geq n/4$ to obtain
	\begin{align*}
	\mathbb{P}\left[ \overline{\mathcal{X}_{k}^{t}} \big| \Y{k}{t-1} \right] 
	= \prod_{x \in A_{k}^{t-1}} \mathbb{P}[x \notin B_{k}^{t-1}]
	\leq \left( 1 -  \left( \frac{1}{5} \right)^{r-1}  \frac{P_{i_{t}}}{3^{i_t}} t^{i_{t}-1} \right)^{\frac{n}{4}}
	\leq \exp\left(-\frac{n}{4} \left( \frac{1}{5} \right)^{r-1} \frac{P_{i_{t}}}{3^{i_t}} t^{i_{t}-1}\right).
	\end{align*}
	From the assumptions of Theorem~\ref{MainResultConst}, we have that 
	$P_{i_{t}} \geq \frac{ C_{r} }{ n (\ln n)^{i_{t}-1} }\geq\frac{c^{i_{t}-1}_{r}}{n (\ln n)^{i_{t}-1}}.$ 
	We deduce that

	\begin{equation}\label{eq:LowerBoundX}
		\mathbb{P}\left[ \overline{\mathcal{X}_{k}^{t}} \big | \Y{k}{t-1} \right]
		\leq \exp\left(- \left( \frac{1}{15} \right)^{r}  \left(  \frac{c_{r}t}{\ln n}\right)^{i_{t}-1}\right).
	\end{equation}
	
	\noindent{Substituting \eqref{eq:LowerBoundS} and \eqref{eq:LowerBoundX} into \eqref{eq:LowerBoundY} gives}
	$$
	\mathbb{P}\left[  \Y{k}{t}\big | \Y{k}{t-1} \right] 
	\geq 1-   \exp\left(-\left( \frac{1}{15} \right)^{r}  \left( \frac{c_{r}t}{\ln n}\right)^{i_{t} -1}\right) - \exp(-\sqrt{n}).
	$$
	
	To complete the proof we recall that $t\leq t_{1},$ $ 2 \leq i_{t}\leq r$ and observe that
	$$
		\left(\frac{t}{\ln n}\right)^{i_{t}-1}
		\leq
		\left( \frac{t_{1}}{\ln n} \right)^{r-1}
		=
		(\ln n)^{\frac{r-1}{r}}=o(\sqrt{n}),
	$$
	and conclude that
	\begin{align}
		\mathbb{P}\left[  \Y{k}{t} \big| \Y{k}{t-1} \right] 
		\geq 1-   \exp\left(-\left(\frac{1}{16}\right)^{r}  \left( \frac{c_{r}t}{\ln n}\right)^{i_{t}-1}\right).
		&\qedhere
	\end{align}

\end{proof}

Recall that $t_{0}= \frac{\ln n}{C^{1/(r-1)}_{r}} =\frac{\ln n}{c_{r}}$. In order to calculate 
a lower bound on the  probability of ``proceeding to step $t_{1}$'' we use Lemma~\ref{lemma:partido} 
to calculate lower bounds for the events ``proceeding to step $t_{0}$'' and ``proceeding to step $t_{1}$ 
given that we already proceeded to step $t_{0}$''. We formally express this in Lemmas~\ref{lemma:tiempo0} 
and \ref{lemma:tiempo1}.

\begin{lemma}\label{lemma:tiempo0}
	$\mathbb{P}\left[\Y{k}{t_{0}} \big| \mathcal{X}_{k}^{0} \right] \geq n^{-\frac{7(r-1)}{c_{r}}}.$ 
\end{lemma}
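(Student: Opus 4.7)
The plan is to express the conditional probability as a telescoping product of the one-step quantities $r_k^t$ and then apply Lemma~\ref{lemma:partido}(b) to each factor. Since $R_k^0 = \varnothing$, the event $\mathcal{S}_k^0$ holds automatically, so $\Y{k}{0} = \mathcal{X}_k^0$. Observing also that the events $\Y{k}{t}$ are nested (both $\mathcal{X}_k^{t} \subseteq \mathcal{X}_k^{t-1}$ and $\mathcal{S}_k^t \subseteq \mathcal{S}_k^{t-1}$ hold by definition), the chain rule yields $\mathbb{P}[\Y{k}{t_0} \mid \mathcal{X}_k^0] = \prod_{t=1}^{t_0} r_k^t$.

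To apply Lemma~\ref{lemma:partido}(b) I would first verify its hypothesis for every $1 \leq t \leq t_0$: since $c_r t \leq c_r t_0 = \ln n$, we have $c_r t/\ln n \leq 1$, and hence $(1/16)^r (c_r t/\ln n)^{i_t-1} \leq 1$. Part (b) then yields $r_k^t \geq (1/32)^r (c_r t/\ln n)^{i_t-1}$. Using $i_t \leq r$ together with $c_r t/\ln n \leq 1$, I can replace the exponent $i_t-1$ by the worst-case value $r-1$ and conclude
\[
\prod_{t=1}^{t_0} r_k^t \;\geq\; \left(\frac{1}{32}\right)^{r t_0}\!\left(\frac{c_r}{\ln n}\right)^{(r-1)t_0}\!(t_0!)^{r-1}.
\]

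The final step is to apply Stirling's bound $t_0! \geq (t_0/e)^{t_0}$ and use the identity $c_r t_0/\ln n = 1$ to cancel factors, which collapses the right-hand side to $\exp(-\alpha_r t_0)$ for some absolute constant $\alpha_r = O(r)$. Since $t_0 = \ln n / c_r$, this gives a bound of the shape $n^{-\alpha_r/c_r}$, and a short arithmetic check confirms that $\alpha_r \leq 7(r-1)$; should the direct estimate be marginal, any extra slack can be recovered by using the sharper inequality $1 - e^{-x} \geq (1 - 1/e)x$ inside Lemma~\ref{lemma:partido}(b), or by exploiting the fact that $i_t - 1$ is strictly smaller than $r-1$ for a positive fraction of the $t$. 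The main obstacle is purely bookkeeping of constants so as to match the stated value $7(r-1)$; all the probabilistic content has already been packaged into Lemma~\ref{lemma:partido}.
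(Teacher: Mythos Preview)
Your approach is the same as the paper's: telescope into one-step probabilities, apply Lemma~\ref{lemma:partido}(b), replace $i_t-1$ by the worst case $r-1$, and then bound the resulting product (the paper in fact glosses over the $\prod_t t$ factor at this last step, which your Stirling argument handles correctly). Your caution about whether the constant comes out exactly as $7(r-1)$ is warranted but immaterial, since the only downstream use (Lemma~\ref{main}) absorbs any $O(r)$ constant into the comparison with $2^{8r+3}$.
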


\begin{proof}[\textbf{Proof}]
	Since $\frac{c_{r}t}{\ln n} \leq 1$ for $1 \leq t \leq t_{0}$, we can use Lemma~\ref{lemma:partido}~(b):
	\begin{align*}
	\mathbb{P}\left[ \Y{k}{t_{0}} \big| \mathcal{X}_{k}^{0}  \right]
	=
	\mathbb{P}\left[ \Y{k}{t_{0}} \big| \mathcal{Y}_{k}^{0}  \right]
	=
	\prod_{t = 1}^{t_{0}} \mathbb{P}\left[ \Y{k}{t} \big| \Y{k}{t-1} \right]	
	\geq 
	\prod_{t=1}^{t_{0}} \left(\frac{1}{32}\right)^{r} \left( \frac{C_{r} t}{\ln n} \right)^{i_{t}-1}
	&\geq
	\prod^{t_{0}}_{t=1} \left(\frac{1}{32^{2}}\right)^{r-1} \left( \frac{c_{r} t}{\ln n} \right)^{r-1}\\
	&\geq
	\left( \frac{c_{r} t_{0}}{1024 \ln n} \right)^{(r-1)t_{0}}\\ 
	&= 	
	\left( \frac{1}{1024} \right)^{(r-1)\frac{\ln n}{c_{r}}} 
	\geq
	n^{-\frac{7(r-1)}{c_{r}}},
	\end{align*}
	since $\frac{1}{1024}\geq \frac{1}{e^{7}}$.
\end{proof}

\begin{lemma}\label{lemma:tiempo1}
	$\mathbb{P}\left[\mathcal{X}_{k}^{t_{1}}\big|\Y{k}{t_{0}}\right]\geq n^{-2^{8r+2}/c_{r}}.$
\end{lemma}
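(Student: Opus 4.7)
Since $\Y{k}{t_{1}}\subseteq\mathcal{X}_{k}^{t_{1}}$, it suffices to lower bound the product
$\mathbb{P}\!\left[\Y{k}{t_{1}}\big|\Y{k}{t_{0}}\right]=\prod_{t=t_{0}+1}^{t_{1}}\mathbb{P}\!\left[\Y{k}{t}\big|\Y{k}{t-1}\right]$
by applying Lemma~\ref{lemma:partido} term by term. Writing $\alpha_{t}:=(1/16)^{r}(c_{r}t/\ln n)^{i_{t}-1}$ for brevity, I would split the range $[t_{0}+1,t_{1}]$ into a ``small'' region $S:=\{t:\alpha_{t}\leq 1\}$, where part~(b) of the lemma applies, and the complementary ``large'' region $L$, where part~(a) yields a per-step probability close to~$1$. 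The underlying intuition is that $t_{0}$ is precisely the scale at which the per-step success probability ceases to be small, and each region should contribute a factor of order $n^{-\Theta(1/c_{r})}$.

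For $S$, part~(b) gives $\mathbb{P}\!\left[\Y{k}{t}\big|\Y{k}{t-1}\right]\geq (1/32)^{r}(c_{r}t/\ln n)^{i_{t}-1}\geq (1/32)^{r}$, the last inequality using $c_{r}t/\ln n\geq 1$ for $t\geq t_{0}$. Moreover the condition $\alpha_{t}\leq 1$ forces $c_{r}t/\ln n\leq 16^{r/(i_{t}-1)}\leq 16^{r}$ (as $i_{t}\geq 2$), so $|S|\leq 16^{r}t_{0}$, and the product over $S$ is at least $(1/32)^{r\cdot 16^{r}t_{0}}=n^{-(r\ln 32)\,16^{r}/c_{r}}$.

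For $L$, part~(a) gives $1-e^{-\alpha_{t}}$ with $\alpha_{t}>1$, so $e^{-\alpha_{t}}<1/e<1/2$ and the inequality $\ln(1-y)\geq -2y$ reduces the log of the product to $-2\sum_{t\in L}e^{-\alpha_{t}}$. The crucial estimate uses $i_{t}\geq 2$ and $c_{r}t/\ln n\geq 1$ to replace $\alpha_{t}$ by the linear lower bound $c_{r}t/(16^{r}\ln n)$, turning the tail into a geometric series,
\[
\sum_{t\in L} e^{-\alpha_{t}} \;\leq\; \sum_{t\geq 1}\exp\!\left(-\frac{c_{r}t}{16^{r}\ln n}\right) \;\leq\; \frac{16^{r}\ln n}{c_{r}},
\]
so the product over $L$ is at least $n^{-2\cdot 16^{r}/c_{r}}$. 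Multiplying the two contributions and checking $(r\ln 32+2)\,16^{r}\leq 2^{8r+2}$ for $r\geq 2$ gives the claimed $n^{-2^{8r+2}/c_{r}}$.

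The main obstacle is the handling of $L$: because $|L|$ may be as large as $t_{1}=(\ln n)^{1+1/r}$, no fixed constant multiplicative loss per step is affordable, and the argument succeeds only because the tail sum $\sum_{t\in L}e^{-\alpha_{t}}$ is merely logarithmic in $n$ even though it ranges over many summands. The crude replacement of $(c_{r}t/\ln n)^{i_{t}-1}$ by $c_{r}t/\ln n$ is what lets one sum uniformly across the (non-increasing, possibly jumping) function $i_{t}$, and the resulting loss of a factor $16^{r}$ in the exponent is absorbed easily by the generous constant $2^{8r+2}$.
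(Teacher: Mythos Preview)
Your argument is correct. Both your proof and the paper's reduce to the same geometric tail sum after applying Lemma~\ref{lemma:partido}~(a) and linearising the exponent via $(c_{r}t/\ln n)^{i_{t}-1}\geq c_{r}t/\ln n$ for $t\geq t_{0}$.

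The difference lies in how the factors with small $\alpha_{t}$ are handled. You split the range into $S=\{t:\alpha_{t}\le 1\}$ and its complement $L$, treating $S$ with the crude per-step bound $(1/32)^{r}$ from part~(b) and $L$ with $\ln(1-y)\ge -2y$; this is self-contained but costs an extra factor $n^{-\Theta(r\,16^{r}/c_{r})}$ from the $S$ region. The paper instead avoids the split via an auxiliary inequality (its Claim~\ref{claim:alphabound}): setting $\alpha=(1-e^{-(1/16)^{r}})^{-1}$, the bound $1-y\ge e^{-\alpha y}$ holds uniformly for all $t\ge t_{0}$, so one passes directly to the exponential of a single geometric sum. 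The paper's route is marginally cleaner and yields a slightly better implicit constant, while yours is more elementary and sidesteps the need for that claim; both losses are comfortably absorbed by the generous $2^{8r+2}$.
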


In the proof of Lemma~\ref{lemma:tiempo1} we will use the following claim.

\begin{claim}\label{claim:alphabound}
For any real numbers $\alpha\ge 1$ and $0 \le y \le 1-\frac{1}{\alpha}$, we have
$$
1-y \ge e^{-\alpha y}.
$$
\end{claim}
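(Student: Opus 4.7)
The plan is to reduce the inequality to a one-variable calculus check. Since both sides are positive on the given range (note $y\le 1-\frac{1}{\alpha}<1$), taking logarithms shows that the claim is equivalent to $\ln(1-y)+\alpha y\ge 0$ on $[0,1-\tfrac{1}{\alpha}]$. I would introduce the auxiliary function
$$
g(y):=\ln(1-y)+\alpha y,
$$
and prove that $g$ is nonnegative on the stated range.

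First I would observe $g(0)=0$, so it suffices to show that $g$ is nondecreasing on $[0,1-\tfrac{1}{\alpha}]$. Differentiating gives
$$
g'(y)=\alpha-\frac{1}{1-y},
$$
and $g'(y)\ge 0$ if and only if $1-y\ge \frac{1}{\alpha}$, i.e.\ $y\le 1-\frac{1}{\alpha}$. This is precisely the hypothesis on $y$, so $g'\ge 0$ throughout the interval in question.

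Hence $g(y)\ge g(0)=0$ for all $y\in[0,1-\tfrac{1}{\alpha}]$, which means $\ln(1-y)\ge -\alpha y$, and exponentiating gives the desired $1-y\ge e^{-\alpha y}$. There is no real obstacle here: the entire content is choosing the right function to differentiate; the role of the hypothesis $y\le 1-\tfrac{1}{\alpha}$ is exactly to keep $g'$ from turning negative, which is why the constraint takes the form it does. The condition $\alpha\ge 1$ is used implicitly to ensure that the interval $[0,1-\tfrac{1}{\alpha}]$ is nonempty and that $g(0)$ is a legitimate starting point for a nondecreasing function argument.
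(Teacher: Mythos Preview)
Your proof is correct. Both your argument and the paper's reduce the claim to the equivalent inequality $\ln(1-y)+\alpha y\ge 0$, but they justify it differently. The paper rewrites the hypothesis as $\alpha\ge \frac{1}{1-y}$, expands $\frac{1}{1-y}=\sum_{i\ge 0} y^i$, and then bounds this from below by $\frac{1}{y}\sum_{i\ge 1}\frac{y^i}{i}=\frac{-\ln(1-y)}{y}$ (treating $y=0$ trivially), which directly gives $\alpha y\ge -\ln(1-y)$. Your approach instead studies $g(y)=\ln(1-y)+\alpha y$ via its derivative and shows it is nondecreasing on the stated range. The paper's series trick is slightly slicker in that it reads off the inequality in one line from the hypothesis, while your calculus argument makes more transparent why the upper endpoint $y=1-\tfrac{1}{\alpha}$ is exactly the right cutoff (it is where $g'$ vanishes). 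Both are short and self-contained; neither has any real advantage over the other.
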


\begin{proof}
	From the hypothesis we deduce that
	\[
		\alpha \geq \frac{1}{1-y} 
		= 
		\sum_{i = 0}^{\infty} y^i 
		\stackrel{(y \geq 0)}{\geq} 
		\frac{1}{y} \sum_{i=1}^{\infty} \frac{y^i}{i} 
		= 
		\frac{-\ln(1-y)}{y}, 
	\]
    and the desired inequality follows.
\end{proof}

\begin{proof}[\textbf{Proof of Lemma~\ref{lemma:tiempo1}}]
	We begin by applying Lemma~\ref{lemma:partido}~(a):
	\begin{align*}
	\mathbb{P}\left[  \mathcal{X}_{k}^{t_{1}}  \big| \Y{k}{t_{0}}    \right] 
	\geq
	\mathbb{P}\left[  \mathcal{Y}_{k}^{t_{1}}  \big| \Y{k}{t_{0}}    \right] 
	=
	\prod_{t = t_{0}+1}^{t_{1}}\mathbb{P}\left[\Y{k}{t}\big|\Y{k}{t-1}\right]
	\geq
	\prod_{t = t_{0}}^{t_{1}} \left( 1 - \exp \left( - \left(\frac{1}{16}\right)^{r}  
	\left(\frac{c_{r}t}{\ln n}\right)^{i_{t}-1} \right) \right). 
	\end{align*}
	
	Setting $\alpha := \frac{1}{1- \exp \left\{ -\left( \frac{1}{16} \right)^{r} \right\} } > 1$ and
	 $y :=  \exp\left( -\left(\frac{1}{16}\right)^{r} \left(\frac{c_{r}t}{\ln n} \right)^{i_{t}-1} \right)$ 
	 and noting that $\frac{c_{r} t}{\ln n} \geq  \frac{c_{r} t_{0}}{\ln n}  =  1$
	 for $t \geq t_{0}$, we deduce that $y \leq \exp\left(- \left(\frac{1}{16}\right)^{r} \right) = 1 - \frac{1}{\alpha} $,
	 therefore we can apply Claim~\ref{claim:alphabound}. Thus
	\begin{align*}
	\mathbb{P}\left[  \mathcal{X}_{k}^{t_{1}}  \big| \Y{k}{t_{0}}    \right] 
	\geq
	\exp\left( -\alpha \sum_{t = t_{0}}^{t_{1}} 
	\exp \left( - \left(\frac{1}{16}\right)^{r} \left(\frac{c_{r} t}{\ln n}\right)^{i_{t}-1} \right)  \right)  
	&\geq \exp\left( -\alpha \sum_{t = t_{0}}^{\infty} 
	\exp \left( - \left(\frac{1}{16}\right)^{r}   \left(\frac{c_{r} t}{\ln n}\right) \right)
	\right)\\
	&= \exp \left( -\frac{\alpha \exp \left(- \left(\frac{1}{16}\right)^{r}  \right) }{ 1 - \exp\left( -\left( \frac{1}{16} \right)^{r} \frac{c_{r}}{\ln n} \right) } \right).
	\end{align*}
	
	We now simplify the denominator by using the inequality $ e^{-x} \leq 1- x/2$ valid for $x\leq 1:$
	\begin{align*}
	\mathbb{P}\left[  \mathcal{X}_{k}^{t_{1}}  \big | \Y{k}{t_{0}}    \right] 
	\geq \exp \left( -\frac{\alpha \exp \left(- \left(\frac{1}{16}\right)^{r}  \right) }{ 1 - \left( 1-\frac{1}{2}\left( \frac{1}{16} \right)^{r} \frac{c_{r}}{\ln n}  \right) } \right)
	=
	\exp\left( -\frac{ 2^{4r+1} \alpha  \exp \left( -\left(\frac{1}{16}\right)^{r} \right) \ln n }{c_{r}} \right).
	\end{align*}
We now observe that
\begin{align*}
2^{4r+1}\alpha \exp\left(-\left(\frac{1}{16}\right)^r\right) = 2^{4r+1}\frac{\exp\left(-\left(\frac{1}{16}\right)^r\right)}{1-\exp\left(-\left(\frac{1}{16}\right)^r\right)}
\le 2^{4r+1} \frac{1}{\frac{1}{2}\left(\frac{1}{16}\right)^r}
= 2^{8r+2},
\end{align*}
and the result follows.
\end{proof}

Using Lemmas~\ref{lemma:tiempo0} and \ref{lemma:tiempo1}, we can complete the proof of Part I

\begin{lemma}\label{main}
	$\mathbf{G}^{(1)}$ contains a percolating subset of size $(\ln n)^{1+ \frac{1}{r}}$ 
	with probability at least $1 - e^{-\sqrt{n}}$.
\end{lemma}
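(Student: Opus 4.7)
The plan is to combine Lemmas~\ref{lemma:tiempo0} and~\ref{lemma:tiempo1} to give a lower bound on the single-round success probability, and then use the fact that Algorithm~\ref{1-1algorithm} makes many independent rounds to boost this into a very strong whp statement. Concretely, in each round $k$ the events $\mathcal{X}_k^{t_0}$ and $\mathcal{X}_k^{t_1}$ are nested, so the chain rule gives
\[
\mathbb{P}\left[\mathcal{X}_k^{t_1}\right]
\;\ge\;
\mathbb{P}\left[\mathcal{X}_k^{t_1}\,\big|\,\mathcal{Y}_k^{t_0}\right]\,
\mathbb{P}\left[\mathcal{Y}_k^{t_0}\,\big|\,\mathcal{X}_k^0\right]
\;\ge\;
n^{-\frac{7(r-1)}{c_r}}\cdot n^{-\frac{2^{8r+2}}{c_r}}
\;\ge\;
n^{-\frac{2^{8r+3}}{c_r}}
\;=:\;q.
\]
Call a round \emph{successful} if it reaches step $t_1$. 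Success of round $k$ is, by construction of the algorithm, determined only by edges incident to the trial set $X_k$, and because at the end of an unsuccessful round we move $X_k$ into $D_{k+1}$ and never test edges incident to it again, the success events of different rounds are mutually independent.

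Next I would bound the probability that \emph{no} round succeeds. With at most $N := n/(2t_1)= n/(2(\ln n)^{1+1/r})$ rounds, independence gives
\[
\mathbb{P}[\text{no round succeeds}]
\;\le\;
(1-q)^N
\;\le\;
\exp(-qN).
\]
So it suffices to show $qN \ge \sqrt{n}$, i.e.\ $q \ge 2(\ln n)^{1+1/r}/\sqrt{n}=n^{-1/2+o(1)}$. Since $c_r = C_r^{1/(r-1)} = 2^{8r^2/(r-1)}\ge 2^{8r}$, the exponent $2^{8r+3}/c_r$ is bounded by a small constant (indeed by $8$), so for $r=o(\sqrt{\ln\ln n})$ we have $q\ge n^{-o(1)}$, which is vastly larger than $n^{-1/2+o(1)}$. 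Consequently $\exp(-qN)\le \exp(-\sqrt{n})$ for $n$ sufficiently large.

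Finally, if some round $k$ is successful then the trial set $X_k^{t_1}$ is, by construction of Algorithm~\ref{1-1algorithm}, a percolating subset of $\mathbf{G}^{(1)}$ of size exactly $t_1=(\ln n)^{1+1/r}$ (every $x_k^{t+1}$ added in step (3) was chosen from $B_k^t$, which precisely guarantees the $r$ edges of different colours needed to merge $\{x_k^{t+1}\}$ with the current percolating cluster). This proves the lemma.

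The main obstacle I expect is book-keeping rather than a deep idea: one has to verify carefully that the choice $C_r=2^{8r^2}$ (and hence $c_r\ge 2^{8r}$) is made precisely so that the constant-order exponent $2^{8r+3}/c_r$ in $q$ is dominated by $1/2$, leaving enough room for the factor $N/\sqrt{n}$. The independence claim must also be checked against the exact record-keeping in Algorithm~\ref{1-1algorithm}: the trial vertices of past rounds are permanently removed, so although dormant vertices may re-enter the active pool, every pair tested in round $k+1$ is genuinely fresh because at least one endpoint lies outside all previous trial sets.
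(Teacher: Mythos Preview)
Your approach is essentially identical to the paper's proof: multiply the bounds from Lemmas~\ref{lemma:tiempo0} and~\ref{lemma:tiempo1} to get a single-round success probability $q=n^{-2^{8r+3}/c_r}$, then use independence across the $N=n/(2t_1)$ rounds.

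There is, however, a slip in your numerics. From $c_r \ge 2^{8r}$ you only obtain $2^{8r+3}/c_r \le 8$, hence $q \ge n^{-8}$; this does \emph{not} give $q \ge n^{-o(1)}$ (the exponent is a genuine positive constant, not $o(1)$), and $n^{-8}\cdot N$ is nowhere near $\sqrt{n}$. The fix is simply to sharpen the bound you already have: since $\tfrac{8r^2}{r-1} = 8r + \tfrac{8r}{r-1} \ge 8r+8$ for $r\ge 2$, in fact $c_r \ge 2^{8r+8}$ (the paper uses $c_r \ge 2^{8r+5}$), so $2^{8r+3}/c_r \le 2^{-5} < \tfrac12$. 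Then $q \ge n^{-1/32}$ and $qN \ge n^{31/32}/\bigl(2(\ln n)^{1+1/r}\bigr) \ge \sqrt{n}$ for large $n$, exactly as in the paper.
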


\begin{proof}[\textbf{Proof}]	
	
	Let $k \leq \frac{n}{2(\ln n)^{1+ \frac{1}{r}}}.$ Applying Lemmas~\ref{lemma:tiempo0}~and~\ref{lemma:tiempo1}, 
	the probability that in round $k$ we find a percolating subset of size $(\ln n)^{1 + \frac{1}{r}}$ is at least
	\[
	n^{-\frac{7(r-1)}{c_r}} \cdot n^{-\frac{2^{8r+2}}{c_r}} \ge n^{-\frac{2^{8r+3}}{c_r}}.
	\]
	We conclude that the probability of not finding a percolating subset of size
	$t_{1}= (\ln n)^{1+\frac{1}{r}}$ in each of the $n/\left(2(\ln n)^{1+ \frac{1}{r}}\right)$
	rounds is at most
	$$
	\left( 1- n^{-\frac{2^{8r+3}}{c_r}} \right)^{\frac{n}{2(\ln n)^{1+ \frac{1}{r}}}}
	\leq 
	\exp\left( - \frac{n^{1-\frac{2^{8r+3}}{c_r}}}{2(\ln n)^{1+\frac{1}{r}}}\right)
	\leq 
	\exp(- \sqrt{n}).
	$$
	These inequalities hold since $c_{r} \ge 2^{8r+5}$, provided $n$ is large enough compared to $c_{r}$.
\end{proof}

\begin{remark}
	We note that as $r$ becomes larger, Algorithm~\ref{1-1algorithm} has a harder time constructing a 
	percolating set larger than $\ln n$. While for two colours we reach size $(\ln n)^{\frac{3}{2}}$ whp, 
	for $r$ colours we must settle for size $(\ln n)^{1+\frac{1}{r}}$.
\end{remark}

\subsection{Part II}\label{part2}

In this subsection we aim to prove that conditioned on the existence of a percolating 
set of size $t_{1}$ in $G^{(1)}$, whp there is  a percolating set of size at least
 $\frac{n}{2^{r+2}}$ in $G^{(1)}\cup G^{(2)}$ (see Lemma~\ref{lemma:part2}). 

We will attempt to construct a percolating set of linear size with the following algorithm:

\begin{alg}[The doubling algorithm]\label{doubling}
    \hspace{1cm} \newline Input: an $r$-fold graph $G^{(2)}$ and a subset $X_{0}$ which 
    is percolating with respect to $G^{(1)}$.
    
    For $t\geq 0$, we construct $X_{t}$ inductively as follows:
	\begin{itemize}
		
		\item Let $A_{t}:= V \setminus X_{t} $ be the set of active vertices.
	\end{itemize}
	
	\begin{enumerate}
		\item[(1)] At step $t \geq 0$ we reveal all edges of $G^{(2)}$ between $A_{t}$ and 
		$X_{t}\setminus X_{t-1}$, where $X_{-1}:= \varnothing$.  We define
		
		\begin{itemize}
			\item 
			$
			B_{t}:= \{ v \in A_{t}: \mbox{ $\forall \  i \in [r]$ there is a $v_{i} \in X_{t}\setminus X_{t-1}$
				such that $v v_{i} \in E_{i}^{(2)}\}.$}
			$
		\end{itemize}
		
		In other words, $B_{t}$ is the set of active vertices joined to $X_{t}\setminus X_{t-1}$ 
		by an edge of each colour from the second round of exposure.

		\item[(2)] If $|B_{t}|< |X_{t}|$ we STOP. Otherwise, we set
		
		\begin{itemize}
			\item $X_{t+1} := X_{t} \cup B_{t}$,
			\item $A_{t+1} := A_{t}\setminus B_{t}.$
		\end{itemize}
		
		If $|X_{t+1}| \geq n /2^{r+2} $ then STOP, otherwise go to (1) for step $t+1$.
	\end{enumerate}
\end{alg}

	We set $b_{t}:=|B_{t}|$ and $x_{t}:=|X_{t}|$ for all $t$.

\begin{remark} \label{remark:double}
	\begin{enumerate}[(i)]
		\item If we reach step $t+1$ in Algorithm~\ref{doubling}, then $b_{s}\geq x_{s}$ 
		for every $s \in [t]$ and therefore
		\[
		x_{s}= x_{s-1}+b_{s-1}
		\geq
		2x_{s-1} \mbox{ for every } s\in [t+1].
		\]
		
		Thus $b_{t} \geq x_{t}\geq 2 x_{t-1}\geq 2^{2} x_{t-2}\geq ... \geq 2^{t}x_{0}=2^{t}t_{1}$.
		\item If we reach step $t+1$, then
		\begin{equation}\label{eq:xtbt}
		x_{t+1}= b_{t}+x_{t}\leq 2b_{t}.
		\end{equation}		
	\end{enumerate}
\end{remark}

	Let \textit{$t_{2}$}$:=\max \left\{t \in \mathbb{N}\cup \{0\} : x_{t} < \frac{n}{2^{r+2}}\right\}$. 
	Note that if Algorithm~\ref{doubling} constructs a percolating set $X_{t}$ of
	 size $\geq \frac{n}{2^{r+2}}$, then it will stop at time $t = t_{2}+1$; 
	 otherwise it will stop at time $t_{2}$. 
	Furthermore, by the previous remark we know that $2^{t_{2}} \leq \frac{b_{t_{2}}}{t_{1}}\le n $,
	 so $t_{2} \leq \log_{2}(n) =O(\ln n)$.

Given an $r$-fold graph $\mathbf{G}$, we denote the event that $V$ contains a percolating subset 
of size at least m by \textit{$\mathcal{E}(\mathbf{G},m)$}. 
The general idea to prove the main result of this section (Lemma~\ref{lemma:part2}) is as follows: we first prove
in Claim~\ref{auxclaim2} that the expected number of ``suitable'' vertices $B_{t}$ is at least twice the size of the
percolating set $X_{t}$ constructed in step $t-1$ (see Steps~1\&2 of Algorithm~\ref{doubling}). 
Subsequently, in Lemma~\ref{lemma:part2aux} we prove a lower bound
on the conditional probability that Algorithm~\ref{doubling} 
proceeds  to step $t+1$ conditioned on it reaching step $t$. 
Finally, we apply this lower bound multiple times to obtain Lemma~\ref{lemma:part2}.

\begin{claim}\label{auxclaim2}
	Let $t \leq t_{2}.$  Then
	\displaymath 
	\mathbb{E}[b_{t}]  \geq 2 x_{t}.
	\enddisplaymath
\end{claim}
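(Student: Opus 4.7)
My plan is to show the conditional bound $\mathbb{E}[b_t \mid \mathcal{H}_t] \geq 2 x_t$, where $\mathcal{H}_t$ denotes the history of Algorithm~\ref{doubling} up to step~$t$, and then take expectations. The key point is that step~$t$ exposes only the edges between $A_t$ and $X_t \setminus X_{t-1}$, which are fresh, so the indicators $\{v \in B_t\}_{v \in A_t}$ are mutually independent conditional on $\mathcal{H}_t$. Setting $m := |X_t \setminus X_{t-1}|$, Remark~\ref{remark:double}(ii) gives $x_t \leq 2m$, so $m \geq x_t/2$. Since $t \leq t_2$ we have $x_t < n/2^{r+2}$, hence $|A_t| = n - x_t \geq n\bigl(1 - 2^{-(r+2)}\bigr)$.

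For each $v \in A_t$ the inclusion probability factors over colors as $\prod_{i=1}^{r}\bigl(1 - (1-p_i/3)^m\bigr)$. I would bound each factor from below by combining $(1-p_i/3)^m \leq e^{-p_i m/3}$ with the inequality $1 - e^{-x} \geq x/2$ for $x \in [0,1]$. Defining $j := \max\{i \in [r] : p_i m/3 \leq 1\}$ (with $j = 0$ if no such index exists), this yields $1 - (1-p_i/3)^m \geq p_i m/6$ for $i \leq j$ and $1 - (1-p_i/3)^m \geq 1 - 1/e$ for $i > j$. Multiplying,
\[
\mathbb{P}[v \in B_t \mid \mathcal{H}_t] \geq \left(\frac{m}{6}\right)^{\!j} P_j \left(1 - \frac{1}{e}\right)^{\!r-j},
\]
with the conventions $P_0 = 1$ and $m^0 = 1$. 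This is the natural multi-color analogue of the single-vertex bound in Lemma~\ref{lemma:azul}, with $m$ playing the role of $t$.

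It then remains to verify $|A_t|\cdot \mathbb{P}[v \in B_t \mid \mathcal{H}_t] \geq 2 x_t$ by a short case analysis on~$j$. For $j = 0$, using $x_t \leq n/2^{r+2}$ and $|A_t| \geq n\bigl(1 - 2^{-(r+2)}\bigr)$, the inequality reduces to $(2^{r+2} - 1)(1 - 1/e)^r \geq 2$, which holds for $r \geq 2$: since $1 - 1/e > 1/2$, one has $(2^{r+2} - 1)(1 - 1/e)^r \geq (2^{r+2} - 1)\cdot 2^{-r} \geq 3$. For $j = 1$, the hypothesis $P_1 \geq C_r \ln n / n$ handles the case after the $m$'s cancel. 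For $j \geq 2$, combining $P_j \geq C_r / (n(\ln n)^{j-1})$ with $m \geq t_1/2 = (\ln n)^{1+1/r}/2$ yields a bound of order $C_r (\ln n)^{(j-1)/r}$, which swamps the combinatorial constants $6^j$ and $(1-1/e)^{-(r-j)}$ thanks to $C_r = 2^{8r^2}$.

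I expect the main obstacle to be the $j = 0$ case. A naive application of Fact~\ref{claim:pt-bounds}(b) gives only $1 - (1-p_i/3)^m \geq 1/5$, hence $\mathbb{P}[v \in B_t \mid \mathcal{H}_t] \geq (1/5)^r$, which is insufficient: the stopping threshold requires a bound of order $2^{-r}$, but $5^r \gg 2^r$. The resolution is to replace the constant $1/5$ by the sharper $1 - 1/e$ (valid because $p_i m > 3$ for all $i$ in the case $j = 0$) and to retain $|A_t| = n - x_t$ exactly rather than loosening to $n/2$. The calibration $2(1-1/e) > 1$ is what makes the resulting inequality tight under the stopping threshold $n/2^{r+2}$; this tension between the constants and the stopping threshold does not arise in the two-color case.
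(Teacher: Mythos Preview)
Your proof is correct and follows essentially the same line as the paper: both compute $\mathbb{P}[v\in B_t]$ as a product over colours, split the colours at the index where $p_i m$ (respectively $p_i x_t$) crosses a fixed threshold, and then do a three-way case analysis on that index using the hypotheses on $P_1$ and $P_j$ for $j\ge 2$.

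The one point worth noting is that your discussion of the case $j=0$ over-dramatises the difficulty. The paper simply uses $1-(1-p_i/3)^m \ge 1 - e^{-p_i m/3} \ge 1/2$ (valid once $p_i m/3 \ge \ln 2$, in particular once it exceeds $1$) together with the cruder bound $|A_t|\ge n/2$, which already yields
\[
\mathbb{E}[b_t] \;\ge\; \frac{n}{2}\cdot \left(\frac{1}{2}\right)^{r} \;=\; \frac{n}{2^{r+1}} \;=\; 2\cdot \frac{n}{2^{r+2}} \;>\; 2x_t.
\]
So neither the sharper constant $1-1/e$ nor the exact value $|A_t|=n-x_t$ is needed; the stopping threshold $n/2^{r+2}$ was chosen precisely to make this the cleanest case. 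Your observation that Fact~\ref{claim:pt-bounds}(b) with the constant $1/5$ would not suffice is correct, but the fix is just to replace $1/5$ by $1/2$, not to tighten both ingredients simultaneously.
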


\begin{proof}[\textbf{Proof}]
	
	Let $q_{t,i}$ denote the probability that a vertex $v \in A_{t}$ is joined to
	$B_{t-1}=X_{t}\setminus X_{t-1}$ by at least one edge of
	$G_{i}^{(2)}.$ From~\eqref{eq:xtbt} we know that $b_{t-1} \geq x_{t}/2$ for
	$0\leq t \leq t_{2}$, where $b_{-1}:= x_{0}$, and so we obtain
	
	\begin{equation}\label{equ:laQuebradora}
	q_{t,i} = 1 - \left(1-\frac{p_{i}}{3}\right)^{b_{t-1}} \geq 1- \left(1-\frac{p_{i}}{3}\right)^{x_{t}/2} \geq 1 - \exp\left( -\frac{p_{i}x_{t}}{6} \right)
	\geq
	\begin{cases}
	\frac{p_ix_t}{12} & \mbox{if }p_ix_t\le 6; \\
	\frac12 & \mbox{otherwise.}
	\end{cases}
	\end{equation}
	Let $j_{t}:= \max \{ j \in [r] \cup \{ 0 \} : p_{j} x_{t} \leq 6  \} \geq 0,$ where $p_{0} :=0.$  
	Recalling that $A_{t}\geq n/2$ for $t \leq t_{2},$ we obtain
	
	\begin{equation*}	
	\mathbb{E}[b_t]
	=
	|A_{t}| \left( \prod_{j=1}^{r} q_{t,j} \right)
	\stackrel{(\ref{equ:laQuebradora})}{\ge} 
	\frac{n}{2} \left( \prod_{j=1}^{j_{t}} \frac{p_{j} x_{t}}{12} \right)
	\left( \frac{1}{2} \right)^{r-j_{t}}
	=
	\begin{cases}
	2\left(\frac{n}{2^{r+2}}\right)\ge 2x_t & \mbox{for }j_t=0;\\
	n \left(\frac{x_{t}}{3}\right)^{j_{t}} P_{j_{t}} \left( \frac{1}{2} \right)^{r+j_{t}+1} & \mbox{otherwise.}
	\end{cases}
	\end{equation*}
	Thus we may assume that $j_t\ge 1$ (otherwise we are done).
	Making a further case distinction we obtain \newline
	\textbf{Case 1: $j_{t}=1.$} We recall that $P_{1}=p_{1}\geq \frac{C_{r}\ln n}{n}$, thus 
	for $n$ large enough we have:
	\displaymath
	\mathbb{E}[b_t]
	\geq
	\left(\frac{C_{r} \ln n}{2^{r+2}}\right) \frac{x_{t}}{3}
	\geq
	2x_{t},
	\enddisplaymath
	since $\frac{C_{r}}{2^{r+2}} \geq 1$.
	
	\noindent\textbf{Case 2: $j_{t}\geq 2.$} We recall that $P_{i}= p_{1}...p_{i} \geq C_{r} / (n (\ln n)^{i-1})$ 
	for all $2 \leq i \in [r]$ and $x_{t}\geq 2^{t}t_{1}$ for all $0\leq t\leq t_{2}$. Thus
	\begin{align*}
	\mathbb{E}[b_t] 
	\geq
	\frac{n}{2^{r+j_{t}+1}} \frac{(2^{t}t_{1})^{j_{t}-1}}{3^{j_t}} x_{t} \left( \frac{C_r}{n (\ln n)^{j_{t}-1}} \right)
	= 
	C_{r} \frac{2^{t(j_t-1)}}{2^{r+j_{t}+1} 3^{j_t}} \left(\frac{t_{1}}{\ln n}\right)^{j_{t}-1} x_{t}
	\geq
	\frac{C_{r}}{2^{2r+1}3^r} x_{t}
	\geq
	2x_{t},
	\end{align*}
	where the last two inequalities are valid since $\frac{t_1}{\ln n}\ge 1$ and $C_{r}\ge 2^{8r^2}\ge 2^{2r+2}3^r$.
\end{proof}

  We apply Claim~\ref{auxclaim2} to bound the probability that we are able to
  double the size of the percolating set in each step.
  
\begin{lemma}\label{lemma:part2aux}
	For each integer $1\le t \le t_2$, we have	
	\displaymath		
	\mathbb{P}\left[ b_{t} \geq x_{t} | X_{t} \neq \varnothing \right] 
	\geq
	1- \exp \left( - \frac{t_{1}}{4} \right).
	\enddisplaymath
\end{lemma}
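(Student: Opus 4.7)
The plan is to apply a standard Chernoff-type concentration inequality on top of the expectation bound already established in Claim~\ref{auxclaim2}. The key structural observation is that, once we condition on the history up to the start of step~$t$ of Algorithm~\ref{doubling} (so that $X_t$, $A_t$, and $X_t \setminus X_{t-1} = B_{t-1}$ are all determined), the indicator random variables $\mathbf{1}[v \in B_t]$ for $v \in A_t$ are mutually independent. Indeed, whether $v$ lies in $B_t$ depends only on the presence of at least one edge in each colour between $v$ and $B_{t-1}$ in $G^{(2)}$; these edge tests involve disjoint vertex pairs for distinct $v \in A_t$, and none of them has been exposed at an earlier step, since at each prior step $s < t$ Algorithm~\ref{doubling} only examined edges incident to $X_s \setminus X_{s-1}$, a set disjoint from $B_{t-1}$.

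Writing $\mu := \mathbb{E}[b_t \mid X_t]$, Claim~\ref{auxclaim2} gives $\mu \geq 2 x_t$. Since $b_t$ is a sum of $|A_t|$ independent Bernoulli random variables with mean $\mu$, the multiplicative lower-tail Chernoff inequality applied with deviation parameter $\delta = 1/2$ yields
$$
\mathbb{P}[b_t < x_t \mid X_t] \leq \mathbb{P}[b_t \leq \mu/2 \mid X_t] \leq \exp(-\mu/8) \leq \exp(-x_t/4).
$$
By Remark~\ref{remark:double}(i) we have $x_t \geq x_0 = t_1$ at every step the algorithm reaches, so the right-hand side is at most $\exp(-t_1/4)$, and averaging over the values of $X_t$ consistent with $X_t \neq \varnothing$ delivers the claim.

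I do not anticipate any serious obstacle here. The delicate work of establishing the multiplicative gap $\mathbb{E}[b_t] \geq 2 x_t$, with its case analysis on the index $j_t$, has already been carried out in Claim~\ref{auxclaim2}, and what remains is a routine concentration argument. The one point that genuinely requires a careful check is the independence of the family $\{\mathbf{1}[v \in B_t] : v \in A_t\}$, but this follows directly from the bookkeeping of exposed edges built into Algorithm~\ref{doubling}.
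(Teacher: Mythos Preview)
Your proposal is correct and follows essentially the same argument as the paper: both proofs note that, conditional on the history, $b_t$ is a sum of independent Bernoulli indicators, apply the multiplicative Chernoff bound with $\delta=1/2$ together with the bound $\mathbb{E}[b_t]\ge 2x_t$ from Claim~\ref{auxclaim2}, and finish via $x_t\ge x_0=t_1$. Your write-up is in fact slightly more careful than the paper's in justifying the independence of the events $\{v\in B_t\}$ via the edge-exposure bookkeeping of Algorithm~\ref{doubling}.
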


\begin{proof}[\textbf{Proof}]
	
	For $t \leq t_{2},$ the trial set $X_{t}$ is of size at most $n/2^{r+2}$. This means that there 
	are at least $n - n/2^{r+2} \geq n/2$ vertices in the set of active vertices $A_{t}.$ 
	
	We note that the events that $v \in B_{t}$ are independent for different $v\in A_{t}$,
	so $b_t$ is distributed as
	$\mbox{Bi}(|A_{t}|, q_{t,1}q_{t,2}...q_{t,r}),$.
	Note that the distribution of $b_t$ is dependent on both $|A_t|$ and $b_{t-1}=|X_t\setminus X_{t-1}|$.
	In what follows we will suppress the conditioning on these two variables for ease of notation.
	
	Now the
	Chernoff bound (see e.g.\ \cite{ProbMethod1}) tells us that 
		\begin{equation}\label{eq:chernoff}
		\mathbb{P}[\mbox{Bi}(m,q) \leq (1-\delta)mq] \leq \exp\left( - \frac{mq \delta^{2}}{2} \right)\ \mbox{ for all }\ 0 < \delta < 1. 
		\end{equation}
	From Claim~\ref{auxclaim2} we deduce that
	\displaymath 
	\mathbb{P}\left[ b_{t}\geq x_{t} | X_{t} \neq \varnothing \right]
	\geq 
	\mathbb{P}\left[ b_{t} >  \frac{\mathbb{E}[b_{t}]}{2} \right]
	\stackrel{\mbox{\footnotesize \eqref{eq:chernoff}}}{\geq }
	1- \exp ( -\mathbb{E}[b_{t}]/8 )
	\geq 
	1 - \exp ( - x_{t}/4 )
	\geq 
	1 - \exp ( -x_{0}/4  ).
	\enddisplaymath
	Recalling that $x_{0}=t_{1}$, this completes the proof.
\end{proof}

We apply Lemma~\ref{lemma:part2aux} multiple times to obtain the main result of this section.

\begin{lemma}\label{lemma:part2}
	For $n$ large enough,	
	\displaymath
	\mathbb{P}\left[ \mathcal{E} \left(G^{(1)} \cup G^{(2)}, \frac{n}{2^{r+2}}\right) \Big |  \mathcal{E}\left(G^{(1)}, (\ln n)^{1+\frac{1}{r}}\right)  \right]
	\geq
	1 - \exp( -\frac{ t_{1} }{ 5 }  ).
	\enddisplaymath
\end{lemma}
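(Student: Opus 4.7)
The plan is to apply Algorithm~\ref{doubling} with $X_0$ equal to the percolating subset of size $t_1 = (\ln n)^{1+1/r}$ whose existence is guaranteed by the conditioning event. Since $X_0$ is percolating with respect to $G^{(1)}$ and percolation is a monotone property under adding edges, any set containing $X_0$ that percolates in $G^{(1)}\cup G^{(2)}$ suffices; in particular, every $X_t$ produced by the algorithm is percolating in $G^{(1)}\cup G^{(2)}$. Thus it is enough to show that, whp, the algorithm reaches the stopping condition $x_t\ge n/2^{r+2}$ rather than halting prematurely with $b_t<x_t$.

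Next I would combine Lemma~\ref{lemma:part2aux} with a union bound over the steps of the algorithm. Recall from the discussion preceding the lemma that $t_2 \le \log_2 n$, so the algorithm executes at most $t_2+1 = O(\ln n)$ steps before stopping (in one way or the other). Lemma~\ref{lemma:part2aux} gives that for each $0\le t\le t_2$, provided we reach step $t$ (i.e.\ $X_t\neq \varnothing$, which holds automatically since $X_0\neq\varnothing$ and the $X_t$ are nested and growing), the conditional probability of failing to at least double in that step is at most $\exp(-t_1/4)$. A union bound yields
\[
\mathbb{P}\bigl[\,\text{Alg.\ \ref{doubling} stops with } x_t < n/2^{r+2}\,\bigm|\,\mathcal{E}(G^{(1)},t_1)\bigr]
\le (t_2+1)\exp(-t_1/4) \le (\log_2 n + 1)\exp(-t_1/4).
\]

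Finally, I would convert this bound into the claimed form. Since $t_1 = (\ln n)^{1+1/r}$ grows faster than any polynomial of $\ln n$, for $n$ large enough we have $\log_2 n + 1 \le \exp(t_1/20)$, so
\[
(\log_2 n +1)\exp(-t_1/4) \le \exp\!\left(\frac{t_1}{20} - \frac{t_1}{4}\right) = \exp\!\left(-\frac{t_1}{5}\right),
\]
which gives the desired bound. There is no real obstacle here: all the work has been done in Claim~\ref{auxclaim2} and Lemma~\ref{lemma:part2aux}, which respectively supplied the expected-doubling estimate and the concentration step; this lemma is simply the iteration of that per-step guarantee, with the number of iterations being small enough (at most $\log_2 n$) to be absorbed by the concentration bound.
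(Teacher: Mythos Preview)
Your proof is correct and essentially the same as the paper's: both iterate Lemma~\ref{lemma:part2aux} over the at most $O(\ln n)$ steps of Algorithm~\ref{doubling} and absorb the polylogarithmic factor into the $\exp(-t_1/4)$ bound. The only cosmetic difference is that the paper multiplies the per-step success probabilities and applies $(1-x)^m \ge 1-mx$, whereas you use a union bound on the failure events directly; these yield the same estimate.
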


\begin{proof}[\textbf{Proof}]
	
	Since $t_{2} \leq K \ln n$ for some $K$, we deduce from Lemma~\ref{lemma:part2aux} that
	\begin{align*}
	\mathbb{P}\left[ \mathcal{E} \left(G^{(1)} \cup G^{(2)}, n/2^{r+2}\right) \Big |  \mathcal{E}\left(G^{(1)}, t_1\right) \right]
	\geq 
	\prod_{t=0}^{t_{2}}\mathbb{P}\left[ b_{t} \geq x_{t} \big | X_{t} \neq \varnothing \right]
	&\geq
	\left(  1 - \exp\left(  - \frac{t_{1}}{4}  \right)  \right)^{K \ln n}\\
	&\geq 
	1 - K (\ln n) \exp \left(  - \frac{t_{1}}{4}  \right) \\
	&\geq
	1 - \exp \left( \ln( K \ln n )  -   \frac{t_{1}}{4} \right) \\
	&\geq
	1 - \exp \left( -\frac{t_{1}}{5}  \right),
	\end{align*}
	where the last inequality is valid since $t_{1}=\Omega(\ln n)$. 
\end{proof}

\subsection{Part III}\label{part3}

Finally we prove that $\mathbf{G}^{*}:= G^{(1)}\cup G^{(2)} \cup G^{(3)}$ percolates whp.

\begin{lemma}\label{parte3}
	Conditioned on $G^{(1)} \cup G^{(2)}$ containing a percolating subset $X$ 
	of size at least $n/2^{r+2},$ $\mathbf{G}^{*}$ percolates whp.
\end{lemma}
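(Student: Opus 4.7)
The plan is to show that a single extra round of exposure, using only the edges of $G^{(3)}$, is enough to absorb every vertex of $V\setminus X$ into the cluster containing $X$. Since $X$ is conditioned to percolate in $G^{(1)}\cup G^{(2)}$, the jigsaw process on $\mathbf{G}^{*}$ eventually produces a cluster $C\supseteq X$ (using only edges of $G^{(1)}\cup G^{(2)}$, which are present in $\mathbf{G}^{*}$ by monotonicity). Once this happens, a vertex $v\in V\setminus X$ will be merged into $C$ as soon as, for every colour $i\in[r]$, $v$ has at least one colour-$i$ neighbour inside $X$ in $G^{(3)}$. So it suffices to prove that whp this holds simultaneously for every $v\notin X$ and every $i\in[r]$.

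First I would note that $G^{(3)}$ is independent of $(G^{(1)},G^{(2)})$, so in particular independent of the conditioning event and of the choice of $X$ (which is measurable with respect to $G^{(1)}\cup G^{(2)}$ alone). Then for a fixed $v\in V\setminus X$ and colour $i$, the probability that $v$ has no colour-$i$ edge to $X$ in $G^{(3)}$ is
\[
\left(1-\frac{p_i}{3}\right)^{|X|}\le \exp\!\left(-\frac{p_i|X|}{3}\right)\le \exp\!\left(-\frac{p_1\cdot n/2^{r+2}}{3}\right)\le \exp\!\left(-\frac{C_r\ln n}{3\cdot 2^{r+2}}\right)= n^{-C_r/(3\cdot 2^{r+2})},
\]
using $p_i\ge p_1\ge C_r\ln n/n$ and $|X|\ge n/2^{r+2}$.

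A union bound over the at most $n$ vertices $v\notin X$ and the $r$ colours gives that the probability of some vertex failing to have a colour-$i$ neighbour in $X$ is at most $nr\cdot n^{-C_r/(3\cdot 2^{r+2})}$. Since $C_r=2^{8r^2}$ dwarfs $3\cdot 2^{r+2}$, the exponent $C_r/(3\cdot 2^{r+2})$ is astronomically larger than $1$, and because $r=o(\sqrt{\ln\ln n})$ the factor $r$ is negligible. Hence this bound is $o(1)$, completing the proof.

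There is no real obstacle here; the argument is a straightforward two-line computation once the independence between the conditioning event and $G^{(3)}$ is noted. The only mildly delicate point is to be clear that $X$ may be chosen as a function of $(G^{(1)},G^{(2)})$ alone, so that the $G^{(3)}$-edges from $X$ to $V\setminus X$ remain fully independent Bernoulli$(p_i/3)$ random variables under the conditioning.
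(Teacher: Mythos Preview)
Your argument is correct and essentially identical to the paper's own proof: both show via a union bound over vertices $v\in V\setminus X$ and colours $i\in[r]$ that whp every such $v$ has a $G^{(3)}$-edge of every colour into $X$, using $|X|\ge n/2^{r+2}$, $p_i\ge p_1\ge C_r\ln n/n$, and $C_r\gg 3\cdot 2^{r+2}$. Your write-up is in fact slightly more careful about the independence of $G^{(3)}$ from the conditioning and the measurability of $X$, but the computation and the idea are the same.
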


Indeed, we will prove that whp every vertex in $V\setminus X$ is connected to $X$ 
by edges of every colour by using the final round of exposure $G^{(3)}.$ 

\begin{proof}[\textbf{Proof of Lemma~\ref{parte3}}]
	
	We begin by  defining $\mathcal{K}$ to be the event that there is at least one vertex  $v \in V \setminus X$
	and one colour $i\in [r]$ such that $E_i^{(3)}$ contains no edge between $v$ and $X$.	
	Thus
	\begin{align*}
	\mathbb{P}[\mathcal{K}]
	\leq
	\sum_{i \in [r]} \sum_{v \in V \setminus X} \left(1-\frac{p_{i}}{3}\right)^{|X|}  
	\leq
	rn\left(1-\frac{p_{1}}{3}\right)^{\frac{n}{2^{r+2}}} 
	\leq
	rne^{-\frac{C_{r}}{3\cdot 2^{r+2}} \ln n }
	=
	rn^{1-\frac{C_{r}}{3\cdot 2^{r+2}}}
	\leq
	\frac{r}{n}
	=o(1),
	\end{align*}
	where the last inequality holds since $C_{r}\ge 2^{8r^2}\ge 3\cdot 2^{r+3}.$ 
	 Since $\bar{\mathcal{K}}$ implies that $\mathbf{G}^{\ast}$ percolates, 
	 this completes the argument.
\end{proof}

\section{Concluding remarks.}\label{concluding}

A number of open questions naturally present themselves.

\subsection{Optimising $C_r$}
Similar to Bollob\'as, Riordan, Slivken and Smith~\cite{jigsawBRSS},
we made no attempt to optimise the constant $C_r$ in
Theorems~\ref{MainResultConst} and~\ref{MainResultNonConst}. As a result,
the bounds on $P_r$ for the subcritical and supercritical case are a long
way apart. It is natural to expect them to be asymptotically
equal, leading to the following strengthening of Theorem~\ref{MainResultConst}:

\begin{conj}\label{conj:exactthreshold}
	Let $r \in \mathbb{N}$. There exists constants $C_1^*,C_2^*,\ldots, C_r^*$ 
	such that the following holds: suppose that $p_{1},...,p_{r}$ are functions of $n$ 
	such that $ 0 \leq p_{1}\leq p_{2} \leq ... \leq p_{r}\leq 1$
	and $\mathbf{G} = \mathbf{G}(n, p_{1}, p_{2} ,..., p_{r})$. For $i\in [r]$ let
	\textit{$P_{i}$ }$:= p_{1}p_{2}...p_{i}$. Then for any constant $\eps>0$:
	\begin{itemize}
		\item[$(i)$] If $P_{i} \leq \frac{(1-\eps)C_i^*}{n (\ln n)^{i-1}}$ for some $2 \leq i \leq r$ or
		$P_{1} \leq \frac{(1-\eps)\ln n}{n} $ then whp $\mathbf{G}$ does \emph{not} percolate.
		\item[$(ii)$]If $P_{i} \geq \frac{(1+\eps)C_i^*}{n (\ln n)^{i-1}}$ for every $ 2\leq i\leq r $ and
		$P_{1} \geq \frac{(1+\eps) \ln n}{n},$ then whp  $\mathbf{G}$ percolates.
	\end{itemize}
\end{conj}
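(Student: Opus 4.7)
The plan is to first identify what the sharp constants $C_i^*$ must be via a heuristic bottleneck analysis, and then prove matching subcritical and supercritical statements near the threshold. Heuristically, the bottleneck for percolation sits at cluster size $t = \Theta(\ln n)$: the probability that a fixed external vertex is absorbed into a percolating cluster of size $t$ is of order $p_1 \prod_{j=2}^{i_t}\bigl(1-(1-p_j)^t\bigr) \approx P_{i_t} t^{i_t-1}$, where $i_t$ is the truncation from Definition~\ref{i_t}. The single constraint $P_i \approx C/(n(\ln n)^{i-1})$ produces its own critical size $t_i^\ast = \Theta(\ln n)$ at which the expected number of extendable vertices is of order~$1$, and $C_i^\ast$ should be pinned down as the value at which that expectation equals exactly~$1$. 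The constants $C_1^\ast,\ldots,C_r^\ast$ have to be calibrated simultaneously because the constraints couple through the common ordering $p_1\le\dots\le p_r$.

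For the subcritical direction (part~(i) of the conjecture), the natural approach is a sharpened first moment argument. Given $P_i \le (1-\eps)C_i^\ast/(n(\ln n)^{i-1})$ for some~$i$, I would show that the expected number of percolating subsets of any size~$s$ in a carefully chosen window around $t_i^\ast$ is $o(1)$. This refines the first-moment argument alluded to in the paper for the qualitative threshold: one enumerates internally spanned subsets by their spanning tree structure in the merging process, and weighs each tree by the correct probability that all required cross-colour edges are present. The numerical value $C_i^\ast$ appears precisely as the radius of convergence of the resulting sum over $s$. A union bound combined with monotonicity of the percolation property in the graphs then rules out any percolating set that could trigger the ``snowball'' doubling phase of Part~II.

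The supercritical direction (part~(ii)) is where I expect the main obstacle. The algorithm of Part~I is inherently lossy: the factor $1/3$ from splitting exposure into three rounds, the constants in Fact~\ref{talacha} and Lemma~\ref{lemma:azul}, and the crude $1-(1-p_j)^t \ge \tfrac{1}{5}$ used in the ``large $p_i$'' regime all contribute multiplicative losses that prevent $C_r$ from being sharp. I would replace the three-round coupling by a single-round exploration with a small sprinkling reserve $(\delta p_i)$ for the final absorption step—analogously to the classical refinement of the Erd\H{o}s–R\'enyi connectivity threshold—so that the main exploration can be run at probabilities $(1-\delta)p_i$ arbitrarily close to the truth. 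The genuinely hard step is then the bottleneck analysis itself: one must show that a single run of the $1$-by-$1$ exploration succeeds in crossing size $t_i^\ast$ with probability bounded away from~$0$ whenever $P_i > (1+\eps)C_i^\ast/(n(\ln n)^{i-1})$ for every $i$. This amounts to replacing the one-sided bounds in Lemma~\ref{lemma:partido} by sharp asymptotics for the branching process at the critical size, and comparing it to a near-critical multi-type Galton–Watson process whose mean offspring number is exactly tuned by the $C_i^\ast$. Executing that comparison—particularly handling the interaction between the different truncations $i_t$ as $t$ varies across $[t_0, t_1]$—is the step I expect to require the most new ideas beyond the proof of Theorem~\ref{MainResultNonConst}.
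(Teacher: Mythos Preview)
The statement you are attempting to prove is labelled a \emph{Conjecture} in the paper, and the paper does not provide a proof of it. It appears in the concluding remarks section as an explicit open problem: immediately after stating it, the authors write ``It would be interesting to determine the exact value of the $C_i^*$.'' There is therefore no proof in the paper for your proposal to be compared against.

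Your document is not a proof either, and you are candid about this: it is a research plan, with the key analytic step (the sharp asymptotics for the one-step success probability at the bottleneck, and the comparison to a near-critical branching process) left as the part that ``I expect to require the most new ideas.'' That is an accurate assessment. The heuristics you give for where the constants $C_i^*$ should come from are reasonable, the idea of replacing the crude three-round splitting by a sprinkling argument is standard and sound, and tightening the first-moment bound in the subcritical regime is plausibly achievable. But none of this constitutes a proof: in particular, you have not actually identified the values of the constants $C_i^*$, nor shown that the branching-process comparison can be carried out with the required precision across the varying truncation index $i_t$. Since the paper itself presents this as an open conjecture rather than a theorem, your proposal should be read as an outline of a possible attack, not as a proof to be checked.
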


It would be interesting to determine the exact value of the $C_i^*$.

\subsection{Size of the critical window}

If the $C_i^*$ can be determined precisely, the next parameter to optimise
would be the parameter $\eps$ in Conjecture~\ref{conj:exactthreshold}. More precisely,
does the result still hold if rather than $\eps$ being a constant it is allowed
to be a function of $n$ which tends to $0$ sufficiently slowly.
This has already been extensively studied in the case $r=1$, which
corresponds to connectedness of the graph, but is an open problem in general.

\subsection{Speed of the jigsaw process}

In the supercritical case of Theorem~\ref{MainResultConst}, we know that whp
the jigsaw percolation algorithm will terminate with just one cluster,
but how many steps does this process require?

More precisely, in each step we create an auxiliary graph on the clusters
of vertices, with an edge between clusters if there are edges between them
of every colour in the $r$-fold graph, and merge each connected component of this
auxiliary graph. How many iterations of this process are required before we have
one single remaining cluster?

An analysis of the proof shows that, for the random graphs considered in the
supercritical case, whp at most
$(1+o(1))(\ln n)^{1+1/r}$ steps are required.
However, this was not optimised and it would be natural to conjecture that actually
$\Theta(\ln n)$ steps are sufficient. It would also be interesting to determine the
constant in this $\Theta(\ln n)$ term, which would most likely be dependent on
how close the probability product $P_r$ is to the jigsaw percolation threshold.

\section{Acknowledgements}

We would like to thank Christoph Koch for his helpful comments on an earlier version of this paper,
including pointing out the simple proof of Claim~\ref{claim:alphabound}.

\end{document}